\renewcommand{\footnote}{\endnote}
\newtheorem{theorem}{Theorem}
\newtheorem{lemma}[theorem]{Lemma}
\newtheorem{proposition}[theorem]{Proposition}
\newtheorem{definition}[theorem]{Definition}
\newtheorem{remark}[theorem]{Remark}
\newtheorem{corollary}[theorem]{Corollary}
\author{Nicolas Bedaride, Pascal Hubert}
\address{Fédération de recherches des unités de mathématiques de Marseille,
Laboratoire d'Analyse Topologie et Probabilités  UMR 6632 , Avenue
Escadrille Normandie Niemen 13397 Marseille
cedex 20, France} \email{nicolas.bedaride@univ-cezanne.fr\\
hubert@cmi.univ-mrs.fr}
\begin{document}
\begin{abstract}
We consider the billiard map in the hypercube of $\mathbb{R}^d$.
We obtain a language by coding the billiard map by the faces of the hypercube. We investigate 
the complexity function of this
language. We prove that $n^{3d-3}$ is the order of magnitude of
the complexity.\\

RESUME:
On consid\`ere l'application du billard dans le cube de $\mathbb{R}^d$. On code cette application par les faces du cube. On obtient un langage, dont on cherche \`a \'evaluer la complexit\'e. On montre que l'ordre de grandeur de cette fonction est $n^{3d-3}$.

\end{abstract}

\title{Billiard complexity in the hypercube}
\maketitle

{\bf Keywords}: Symbolic dynamic, billiard, words, complexity function.

 {\bf Mots-cl\'es}: Dynamique symbolique, billard, mots, complexit\'e.
 
AMS codes: 37A35 ; 37C35;  05A16; 11N37; 28D.
\begin{center}
{\bf Complexit\'e du billard cubique multi-dimensionnel.}
\end{center}

\section{Introduction}

A billiard ball, i.e.\ a point mass, moves inside a polyhedron $P$
with unit speed along a straight line until it reaches the
boundary $\partial{P}$, then it instantaneously changes direction
according to the mirror law, and continues along the new line.

Label the faces of $P$ by symbols from a finite alphabet
$\mathcal{A}$ whose cardinality equals the number of faces of $P$.
Either we consider the set of billiard orbits in a fixed
direction, or we consider all orbits.

In both cases the orbit of a point corresponds to a word in the
alphabet $\mathcal{A}$ and the set of all the words is a language.
We define the complexity of the language, $p(n)$, by the number of
words of length $n$ that appears in this system. We call the
complexity of an infinite trajectory the directional complexity:
it does not depend on the initial point under suitable
hypotheses. We denote it by $p(n,\omega)$ (where $\omega$ is the initial direction of the trajectory), and the other one the
global complexity or to short simply the complexity. How complex is the
game of billiard inside a polygon or a polyhedron ?
The only general result 
about complexity function is
that the billiard in a polygon has zero entropy \cite{Ga.Kr.Tr}
\cite{Ka}, and thus the two complexities grow sub-exponentially.
For the convex polyhedron the same fact is true \cite{moi3}.

It is possible to compute the complexity for rational polygons
(a polygon is rational if the angles between sides are rational
multiples of $\pi$). For the directional complexity the first
result is in the famous paper of Morse and Hedlund \cite{He.Mo},
and it has been generalized to any rational polygon by Hubert
\cite{Hu}. This directional complexity is always linear in $n$.

For the global complexity in the square coded by two letters,
Mignosi found an explicit formula see \cite{Mig}, \cite{Ber.Poc}.
Then Cassaigne, Hubert and Troubetzkoy \cite{Ca.Hu.Tr} proved that
$p(n)/n^{3}$ has a lower and an upper bound, in the case of rational
convex polygons, and the first author generalized this result to
the case of non convex rational polygons \cite{moi1}. Moreover, for
some regular polygons they showed that $p(n)/n^{3}$ has a limit, and
then calculated it. But even for the hexagon we are not able to obtain an
equivalent statement: we must use the result of Masur that gives the order
of magnitude of the number of saddle connections \cite{Ma.90}.

In the polyhedral case much less is known. The directional
complexity, in the case of the cube, has been computed by Arnoux,
Mauduit, Shiokawa
 and Tamura \cite{Ar.Ma.Sh.Ta} and generalized to the hypercube by
Baryshnikov \cite{Ba}; see also \cite{moi6} for a generalization. 
Moreover, in \cite{moi1} the computation has
been done in the case of some right prisms whose bases tile the plane. For
those polyhedra the directional complexity is always quadratic in $n$.  In the current 
article we compute the global complexity for the
hypercube of $\mathbb{R}^d$ coded with $d$ letters.

\begin{theorem}\label{cube}
Let $p(n,d)$ be the complexity of the language associated to the
hypercubic billiard (coded with d letters). Then there exists
$C_1,C_2\in\mathbb{R}_+$ such that
$$C_1n^{3d-3}\leq p(n,d)\leq C_2n^{3d-3}.$$
\end{theorem}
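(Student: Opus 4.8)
The plan is to reduce the billiard to a linear flow by unfolding. Reflecting the hypercube across its faces, a billiard trajectory with direction $\omega=(\omega_1,\dots,\omega_d)$ becomes a straight line $x(t)=x_0+t\omega$ in $\mathbb{R}^d$, and a bounce off the pair of faces orthogonal to the $i$-th axis is recorded exactly when the $i$-th coordinate crosses the integer lattice. Thus the letter $i$ is emitted at the times $t$ with $x_{0,i}+t\omega_i\in\mathbb{Z}$, and the coded word is nothing but the \emph{interleaving order} of the $d$ arithmetic progressions of crossing times, one progression per letter, with gap $1/\omega_i$ and phase determined by $x_{0,i}$. A word of length $n$ is therefore the order type of the first $n$ terms of the merged progressions, and it is a locally constant function of the pair (direction, starting position), which lives in a phase space of dimension $(d-1)+(d-1)=2d-2$ (projective direction, plus position on a transversal after removing the time translation). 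Counting words of length $n$ amounts to counting the cells of the partition of this phase space on which the first $n$ symbols are constant.

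The cell boundaries are the \emph{collision hypersurfaces} $\{t_{i,k}=t_{j,l}\}$, where a crossing of type $i$ and a crossing of type $j$ happen simultaneously; geometrically the unfolded line then passes through a codimension-two face (an edge) of the integer grid. I would organize the count by first fixing the direction and then letting it vary. For a fixed generic direction the number of words is the directional complexity, which is $\Theta(n^{d-1})$ by Baryshnikov \cite{Ba}: the $d-1$ independent phases are each resolved at scale $1/n$. The global complexity is then the sum of these directional counts over the relevant \emph{slope-classes}, the maximal sets of directions producing, for suitable phases, the same family of length-$n$ words.

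For the upper bound the key estimate is that there are only $O(n^{2(d-1)})$ slope-classes relevant to words of length $n$, i.e.\ that each of the $d-1$ projective slope coordinates is resolved at scale $1/n^2$ rather than $1/n$. This is a simultaneous-Diophantine-approximation statement: within the window producing $n$ crossings the collisions are governed by lattice vectors of height $O(n)$, so two directions yield the same interleaving of the first $n$ crossings whenever their slopes lie in a common cell of a Farey-type (three-distance) dissection of order $n$, and there are $O(n^{2(d-1)})$ such cells. Multiplying by the $O(n^{d-1})$ directional count gives $p(n,d)=O(n^{3d-3})$. For the lower bound I would exhibit $\gtrsim n^{2(d-1)}$ directions, pairwise separated at slope-scale $1/n^2$ (for instance along a continued-fraction family realizing simultaneous approximations), and for each such direction $\gtrsim n^{d-1}$ distinct phases; one then checks that the resulting $\gtrsim n^{3d-3}$ words are pairwise distinct by pointing, for any two of them, to a crossing index $\le n$ at which the recorded letters differ.

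The main obstacle is the slope count. The arrangement of collision hypersurfaces is extremely non-generic: there are $\Theta(n^2)$ of them, and a generic arrangement in dimension $2d-2$ would create $\Theta(n^{4(d-1)})$ cells, far more than the true $n^{3d-3}$, so the cell count cannot be read off from the number of hypersurfaces and must instead be extracted from the arithmetic of the grid. Concretely one must prove a multidimensional three-distance theorem controlling how the interleaving of $d$ progressions varies with the slope vector, and in particular show that once the $d-1$ pairwise interleavings of one fixed letter against each of the others are prescribed, the remaining $\binom{d}{2}-(d-1)$ pairwise interleavings contribute only lower-order refinements. This collapse from $\binom{d}{2}$ to $d-1$ effective slope parameters is exactly what pins the exponent at $3(d-1)$, and it is the technical heart of both bounds.
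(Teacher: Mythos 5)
Your plan reduces the theorem to one unproven claim, and you say so yourself: the assertion that the direction space splits into $O(n^{2(d-1)})$ ``slope-classes'' on which the set of length-$n$ words is constant --- your multidimensional three-distance theorem. As written this is a genuine gap, not a deferred routine verification. For $d=2$ the statement is the Farey dissection underlying Mignosi's formula, but for $d\geq 3$ nothing of the sort is available off the shelf: the three-distance theorem is known \emph{not} to generalize to several rotations (the number of distinct gap lengths for two simultaneous rotations is unbounded), so the claimed collapse from $\binom{d}{2}$ pairwise interleavings to $d-1$ effective slope parameters cannot simply be invoked; its proof would carry essentially the whole weight of the theorem. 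The lower bound has the same character: the pairwise distinctness of your $\gtrsim n^{3d-3}$ words across different directions and phases is announced (``one then checks'') but no mechanism is given, and producing $\gtrsim n^{2(d-1)}$ directions that are simultaneously well-separated at slope-scale $1/n^2$ \emph{and} each support $\gtrsim n^{d-1}$ phases realizing distinct admissible words is nontrivial --- the constraint that trajectories stay inside the cube couples the phases to the direction.

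It is worth seeing how the paper sidesteps exactly the non-genericity you correctly identify. It never counts cells of the full arrangement of $\Theta(n^2)$ collision hypersurfaces in the $(2d-2)$-dimensional phase space. Instead it differences the complexity twice: by Cassaigne's identity (Lemma \ref{julien}), $s(n+1,d)-s(n,d)$ is a sum of $i(v)$ over bispecial words; each bispecial word determines a unique diagonal between codimension-two faces (Lemmas \ref{unicode} and \ref{surjcu}); there are only $O(n^{d-1})$ diagonals of combinatorial length $n$ (Corollary \ref{diag}); and within the cell of a \emph{fixed} diagonal --- which has dimension only $2(d-2)$ by Lemma \ref{connexe} --- merely $O(n)$ discontinuities intervene, so the crude arrangement bound via Euler's formula (Corollary \ref{comb}) already gives $O(n^{2d-4})$ words per diagonal. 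Hence $s(n+1,d)-s(n,d)=O(n^{3d-5})$ and, summing twice, $p(n,d)=O(n^{3d-3})$: after the reduction to a diagonal the generic count is sharp, and no arithmetic cancellation in the arrangement is needed. The arithmetic enters only in the lower bound, handled by induction on $d$ through the projection $\pi$ (Proposition \ref{projec}) and the M\"obius-function estimate of Lemma \ref{nombre}, rather than by exhibiting explicit direction families. If you wish to pursue your route, the honest statement is that the slope-class bound you need is an open sublemma at least as hard as the counting it is meant to prove, whereas the bispecial/diagonal differencing dissolves it.
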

\begin{remark}
In the proof some constants appear in the inequalities. We
denote them all by the same letter $C$.\\
Moreover we will use the term "cube" even if $d$ is greater than
three.
\end{remark}

\subsection{Overview of the proof}
The proof of \cite{Ca.Hu.Tr} is based on the fact that the
complexity is related to the number of generalized diagonals. A
generalized diagonal is an orbit segment which starts and ends on
a vertex of the polygon (or an edge of the polyhedron). If we wish
to apply this technique to the hypercube, however, a generalized
diagonal is not necessarily associated to a single word, so that we
must modify the proof. First we show that the complexity is
related to the number of words that appear in one diagonal, see
Section 3. Next we begin to count the numbers of those words. We split
the estimates between several parts. Section 5 is devoted to obtaining the
upper bound by a general geometric argument. In Section 6 we establish the lower bound by an induction on the dimension $d$.

\section{Background}
\subsection{Billiard}
In this section we recall some definitions: Let $P$ be a
polyhedron, the billiard map is called $T$ and it is defined on a
subset of $\partial{P}\times \mathbb{R}P^{d-1}$. This space is called the
phase space.

$\bullet$ We will call a face of the cube a face of dimension $d-1$.
If we use a face of smaller dimension we will state the
dimension.

We define a partition $\mathcal{P}$ of the phase space on $d$ sets
by the following method: the boundary of $P$ is partitioned into $d$
sets by identifying the parallel faces of the cube. Then we
consider the partition
$\mathcal{P}_n=\displaystyle\bigvee_{i=0}^{n}T^{-i}\mathcal{P}$.
\begin{definition}
The complexity of the billiard map, denoted $p(n,d)$, is the
number of atoms of $\mathcal{P}_n$.
\end{definition}
$\bullet$ The unfolding of a billiard trajectory: Instead of
reflecting the trajectory in the face we reflect the cube and
follow the straight line. Thus we consider the tiling of
$\mathbb{R}^{d}$ by $\mathbb{Z}^{d}$, and the associated partition
into cubes of edges of length one. In the following when we use the term "face" we mean 
a face of one of those cubes.\\

The following lemma is very useful in the following.
\begin{lemma}\label{proj}
Consider an orthogonal projection on a face of the cube. The
orthogonal projection of a billiard map is a billiard map inside a
cube of dimension equal to the dimension of the face.
\end{lemma}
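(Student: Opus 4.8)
The plan is to exploit the fact that the billiard flow in a cube decouples completely into independent one-dimensional motions, one for each coordinate, and that an orthogonal projection onto a face simply forgets some of these coordinates. I would work in the unfolded picture that has just been introduced, where a trajectory is a straight line in the tiling of $\mathbb{R}^d$ by unit cubes, since there the statement becomes almost transparent: the orthogonal projection of a straight line is a straight line, and the orthogonal projection of the tiling of $\mathbb{R}^d$ by unit cubes onto a coordinate subspace is again a tiling of that subspace by unit cubes.

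First I would set up coordinates so that the face $F$ of dimension $m$ is obtained from $[0,1]^d$ by fixing the coordinates with index outside a subset $I\subset\{1,\dots,d\}$ with $|I|=m$, and let $\pi$ denote the orthogonal projection onto the coordinate subspace spanned by $(e_i)_{i\in I}$. Writing the unfolded trajectory as $x(t)=x_0+tv$, I would note that $\pi(x(t))=\pi(x_0)+t\,\pi(v)$ is again an affine line, and that $\pi$ carries the integer lattice $\mathbb{Z}^d$ onto the lattice $\mathbb{Z}^I$ underlying the tiling of the face. Folding this projected line back into the $m$-cube yields a billiard trajectory in a cube of dimension $m$, which is exactly the assertion.

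To connect this with the map $T$ on the phase space rather than with whole trajectories, I would argue at the level of the reflection law. The reflection of the cube across the face $\{x_i=0\}$ or $\{x_i=1\}$ reverses the sign of $v_i$ and leaves the other velocity components unchanged; equivalently, each coordinate performs an independent one-dimensional billiard between $0$ and $1$. Hence a reflection in a coordinate $i\notin I$ does not alter $\pi(v)$ at all, while a reflection in a coordinate $i\in I$ reverses the $i$-th component of $\pi(v)$, i.e.\ it is precisely a reflection of the $m$-dimensional cube. Therefore the sequence of segments traversed by $\pi(x(t))$ obeys the billiard law of the face.

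The main point requiring care, and the step I expect to be the real obstacle, is the bookkeeping between the two time parametrizations: reflections of the full trajectory in the discarded coordinates $i\notin I$ produce extra reflection instants that become invisible after projection, so the discrete billiard map of the $d$-cube and that of the face are not conjugate step by step. I would handle this by phrasing the conclusion purely in terms of the images of trajectories (the unfolded straight lines), where no choice of time step is needed, and only afterwards translate back to the statement about the billiard map, checking that projecting and then folding agrees with folding and then projecting.
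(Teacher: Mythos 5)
Your proposal is correct, and it is worth noting that the paper offers \emph{no} proof of Lemma \ref{proj} at all: the lemma is stated as evident immediately after the unfolding is introduced. Your argument---decoupling the cube billiard into independent one-dimensional motions, observing that the orthogonal projection onto a coordinate subspace sends the unfolded straight line to a straight line and the lattice $\mathbb{Z}^d$ to the lattice of the face, and checking that reflections in a discarded coordinate project to the identity while reflections in a retained coordinate project to reflections of the smaller cube---supplies exactly the missing justification. Your closing caveat about the mismatch of time parametrizations is also the right one, and it is consistent with how the lemma is actually used later: in Section 6 the authors note that ``the map $\pi$ consists to erase some letters,'' which is precisely the phenomenon you identify, namely that reflection instants in the coordinates orthogonal to the face become invisible after projection, so the projected billiard map is a factor of the original one only after deleting those steps, not a step-by-step conjugate. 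Phrasing the conclusion at the level of trajectories (unfolded lines) and only then folding back, as you do, is the clean way to make the statement precise.
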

\subsection{Combinatorics}
\begin{definition}
Let $\mathcal{A}$ be a finite set called the alphabet. By a
language $L$ over $\mathcal{A}$ we always mean a factorial
extendable language: a language is a collection of sets
$(L_n)_{n\geq 0}$ where the only element of $L_0$ is the empty
word, and each $L_n$ consists of words of the form $a_1a_2\dots
a_n$ where $a_i\in\mathcal{A}$ and such that for each $v\in L_n$
there exist $a,b\in\mathcal{A}$ with $av,vb\in L_{n+1}$, and for
all $v\in L_{n+1}$ if $v=au=u'b$ with $a,b\in\mathcal{A}$ then
$u,u'\in L_n$.\\
The complexity function $p:\mathbb{N}\rightarrow\mathbb{N}$ is
defined by $p(n)=card(L_n)$.
\end{definition}

 First we recall a well known result of Cassaigne
concerning combinatorics of words \cite{Ca}.
\begin{definition}
Let $\mathcal{L}(n)$ be a factorial extendable language. For any
$n\geq1$ let $s(n)\!:=p(n+1)-p(n)$. For $v \in \mathcal{L}(n)$ let

$$m_{l}(v)= card\{u\in \Sigma,uv\in \mathcal{L}(n+1)\},$$
$$m_{r}(v)= card\{w\in \Sigma,vw\in \mathcal{L}(n+1)\},$$
$$m_{b}(v)= card\{u\in \Sigma, w\in \Sigma, uvw\in, \mathcal{L}(n+2)\}.$$

A word is called right special if $m_{r}(v)\geq 2$. A word is called left special if
$m_{l}(v)\geq 2$ and it is called bispecial if it is right and left special.
Let $\mathcal{BL}(n)$ be the set of the bispecial words.
\end{definition}
Cassaigne \cite{Ca} has shown:
\begin{lemma}\label{julien}
Let $\mathcal{L}$ be a language such that $m_l(v)\geq1,m_r(v)\geq1$
for all words $v\in\mathcal{L}$. Then the complexity satisfies
$$\forall n\geq 1 \quad s(n+1)-s(n)=\sum_{v\in \mathcal{BL}(n)}{i(v)},$$

where $i(v)=m_{b}(v)-m_{r}(v)-m_{l}(v)+1.$
\end{lemma}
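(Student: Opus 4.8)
The plan is to express everything in terms of counts of one-letter extensions of words of length $n$, and then to reduce the resulting sum to the bispecial words. First I would record the relevant counting identities. Writing $\Sigma$ for the alphabet, each word of $\mathcal{L}(n+1)$ is of the form $vw$ for a unique $v\in\mathcal{L}(n)$ and a unique letter $w\in\Sigma$, so $p(n+1)=\sum_{v\in\mathcal{L}(n)}m_r(v)$; reading from the left instead gives the companion identity $p(n+1)=\sum_{v\in\mathcal{L}(n)}m_l(v)$. Likewise each word of $\mathcal{L}(n+2)$ has a unique central factor $v$ of length $n$ together with a choice of its first and last letter, whence $p(n+2)=\sum_{v\in\mathcal{L}(n)}m_b(v)$. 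Trivially $p(n)=\sum_{v\in\mathcal{L}(n)}1$.

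Next I would assemble the second difference. Since $s(n+1)-s(n)=p(n+2)-2p(n+1)+p(n)$, substituting the identities above, and using one expression for $p(n+1)$ for each of the two copies, yields
\[
s(n+1)-s(n)=\sum_{v\in\mathcal{L}(n)}\bigl(m_b(v)-m_r(v)-m_l(v)+1\bigr)=\sum_{v\in\mathcal{L}(n)}i(v).
\]
It then remains to discard the non-bispecial terms.

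The crux is to show that $i(v)=0$ whenever $v$ is not bispecial. Because the language is extendable we have $m_l(v)\geq1$ and $m_r(v)\geq1$, so a non-bispecial $v$ satisfies $m_l(v)=1$ or $m_r(v)=1$. Suppose $m_r(v)=1$, with unique right extension $w$. If $uv\in\mathcal{L}(n+1)$, then extendability provides some letter $w'$ with $uvw'\in\mathcal{L}(n+2)$; but $vw'$ is then a factor of this word, hence lies in $\mathcal{L}(n+1)$, which forces $w'=w$ by uniqueness. Thus $u\mapsto uvw$ is a bijection from the set of left extensions of $v$ onto the set of words $uvw\in\mathcal{L}(n+2)$, giving $m_b(v)=m_l(v)$ and therefore $i(v)=m_l(v)-1-m_l(v)+1=0$. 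The case $m_l(v)=1$ is symmetric and gives $m_b(v)=m_r(v)$, again $i(v)=0$. Restricting the sum to $\mathcal{BL}(n)$ then produces the stated formula. The only genuinely delicate point is this last bijection, where both factoriality and extendability of the language are used; the rest is bookkeeping with the counting identities.
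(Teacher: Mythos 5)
Your proof is correct and complete: the counting identities $p(n+1)=\sum_v m_r(v)=\sum_v m_l(v)$, $p(n+2)=\sum_v m_b(v)$, the second-difference computation, and the bijection showing $i(v)=0$ for non-bispecial $v$ (where you rightly invoke both factoriality and the extendability hypothesis $m_r\geq 1$, $m_l\geq 1$) constitute the standard argument. The paper itself gives no proof, deferring to \cite{Ca} and \cite{Ca.Hu.Tr}, and your argument is essentially the one found there, so there is nothing to fault.
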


For the proof of the lemma we refer to \cite{Ca} or
\cite{Ca.Hu.Tr}.
\begin{remark}
If we code the billiard map by the sequence of faces hit in a
trajectory, and if we associate the same letter to the parallel
faces of the cube the two definitions of complexity coincide, {\it
i.e} $p(n,d)=p(n)$.
\end{remark}

\subsection{Geometry}
We recall the Euler's formula.
\begin{lemma}\label{euler}\cite{Ful}
Let $P$ be a simply connected polyhedron of $\mathbb{R}^d$. Let
$N_i$ be the number of faces of $P$ of dimension $i$, then we have
$$\displaystyle\sum_{i=0}^{d-1}(-1)^iN_i=1-(-1)^d.$$
\end{lemma}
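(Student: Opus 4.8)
The plan is to read the left-hand side as the Euler characteristic of the boundary $\partial P$ and then to use that this boundary is a topological $(d-1)$-sphere.

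First I would observe that the faces of $P$ of dimensions $0,1,\dots,d-1$ --- the vertices, edges, \dots, facets, but \emph{not} the top-dimensional cell $P$ itself --- form a finite polyhedral, hence CW, decomposition of the boundary $\partial P$. In this cell structure the $i$-cells are precisely the $i$-dimensional faces, of which there are $N_i$; so by definition the alternating count $\sum_{i=0}^{d-1}(-1)^iN_i$ is the combinatorial Euler characteristic of the complex $\partial P$.

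Next I would invoke the standard invariance statement recorded in \cite{Ful}: the combinatorial Euler characteristic of a finite CW complex equals its topological Euler characteristic, and in particular is a homeomorphism invariant independent of the chosen decomposition. This gives $\sum_{i=0}^{d-1}(-1)^iN_i=\chi(\partial P)$, reducing the lemma to a single topological computation. It then remains to evaluate $\chi(\partial P)$. Since $P$ is a simply connected polyhedron --- for the purposes here, homeomorphic to a closed $d$-ball --- its boundary is homeomorphic to the sphere $S^{d-1}$. Using the minimal cell decomposition of $S^{d-1}$ into one $0$-cell and one $(d-1)$-cell (for $d=1$ the sphere $S^{0}$ is two points), one obtains the classical value $\chi(S^{d-1})=1+(-1)^{d-1}=1-(-1)^d$, which is exactly the asserted right-hand side.

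The step needing the most care is the identification $\partial P\cong S^{d-1}$ extracted from the hypothesis \emph{simply connected}: for the convex polyhedra and for the hypercube that concern this paper it is immediate, since a convex body is a ball with spherical boundary, but in dimension $d\geq 4$ simple connectivity of $P$ alone would have to be supplemented to guarantee a spherical boundary. A topology-free alternative is induction on $d$, with base case the segment ($d=1$, $N_0=2=1-(-1)^1$) and an inductive step writing $\partial P$ as the union of its facets, each a $(d-1)$-polyhedron to which the hypothesis applies; there the delicate bookkeeping is the inclusion--exclusion correction for the faces shared between distinct facets.
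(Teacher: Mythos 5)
The paper gives no proof of this lemma at all: it is quoted as a known fact from \cite{Ful}, so there is no internal argument to compare against. Your proposal is the standard topological proof, and its skeleton is sound: the faces of dimensions $0,\dots,d-1$ form a regular CW decomposition of $\partial P$, the alternating sum $\sum_{i=0}^{d-1}(-1)^iN_i$ is therefore the combinatorial Euler characteristic of $\partial P$, invariance of $\chi$ reduces everything to one computation, and $\chi(S^{d-1})=1+(-1)^{d-1}=1-(-1)^d$ gives the stated value. For the convex cells to which the paper actually applies the formula (pieces cut out by hyperplanes, or by quadrics as in Remark \ref{rem10}), the identification $\partial P\cong S^{d-1}$ is immediate and your argument is complete.

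Your caveat about the hypothesis is genuine, and in fact understated: the failure of ``simply connected'' to force a spherical boundary already occurs at $d=3$, not only $d\geq 4$. Take a cube with a concentric open cubical cavity removed: the resulting solid is simply connected (it retracts to $S^2$, so $\pi_1=0$), yet its boundary is two disjoint $2$-spheres and $N_0-N_1+N_2=16-24+12=4\neq 1-(-1)^3=2$. Simple connectivity controls $\pi_1$ but not the higher homology of $P$, and it is $H_*(\partial P)$ that enters the count; what the lemma really needs is $P$ homeomorphic to a closed ball (convexity suffices), which holds in every use made of it in this paper. So as literally stated the lemma is false, and your proof correctly isolates the exact point where the hypothesis must be strengthened. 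Your fallback suggestion --- induction on $d$ over the facets of $\partial P$ --- is viable but, as you note, the inclusion--exclusion over faces shared by several facets is precisely the bookkeeping that makes elementary proofs of Euler's relation delicate, so the CW/invariance route with the corrected hypothesis is the cleaner repair.
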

\begin{remark}\label{rem10}
In the following sections, we will use this formula for some algebraic
manifolds of degree 2, but for simplicity we will always mean
a polyhedron and hyperplanes, since the proofs are the same.
\end{remark}
Now we prove the following result.
\begin{lemma}
Suppose $(H_i)_{i\leq n}$ is a sequence of hyperplanes of $\mathbb{R}^x$ and let $(\mathcal{Q}_i)_{i\in I}$ be the
connected components of $\mathbb{R}^x \setminus H_1\cup\dots\cup H_n$. Then there exists $C(x)>0$ such that:
$$card I\leq C(x)n^x.$$
\end{lemma}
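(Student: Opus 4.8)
The plan is to bound the number of regions of an arrangement of $n$ hyperplanes in $\mathbb{R}^x$ by a double induction: on the dimension $x$ for a geometric counting step, and on the number $n$ of hyperplanes for a recurrence. Write $R(n,x)$ for the maximal number of connected components of $\mathbb{R}^x\setminus(H_1\cup\dots\cup H_n)$ over all choices of $n$ hyperplanes; it suffices to prove $R(n,x)\leq C(x)n^x$ for a suitable constant $C(x)$ and all $n\geq 1$, since $\mathrm{card}\,I\leq R(n,x)$. The base case is $x=1$: here $n$ points split $\mathbb{R}$ into at most $n+1$ intervals, so $R(n,1)\leq n+1\leq 2n$, giving the claim with $C(1)=2$.

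For the inductive step I would pass from the arrangement $H_1,\dots,H_{n-1}$ to the one obtained by adding the hyperplane $H_n$, and count the newly created regions. The key geometric observation is that a component $\mathcal{Q}$ of the old arrangement is split into two by $H_n$ precisely when $H_n$ meets the interior of $\mathcal{Q}$, and then $H_n\cap\mathcal{Q}$ is a single connected piece. Indeed each component $\mathcal{Q}$ is an intersection of open half-spaces, hence convex, so its intersection with $H_n$ is convex and therefore connected. Consequently the number of regions gained when adding $H_n$ equals the number of connected components of $H_n\setminus(H_1\cup\dots\cup H_{n-1})$. The traces $H_i\cap H_n$ for $i<n$ lie in hyperplanes of the affine space $H_n\cong\mathbb{R}^{x-1}$, so by the induction hypothesis on the dimension this number is at most $R(n-1,x-1)\leq C(x-1)n^{x-1}$.

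This yields the recurrence
$$R(n,x)\leq R(n-1,x)+C(x-1)n^{x-1}.$$
Iterating it down to $R(0,x)=1$ and using $\sum_{k=1}^{n}k^{x-1}\leq n^{x}$ gives $R(n,x)\leq 1+C(x-1)n^{x}\leq C(x)n^x$ for $n\geq 1$, with for instance $C(x)=1+C(x-1)$, which is the desired bound.

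The main obstacle is the geometric counting step: arguing cleanly that adding one hyperplane creates exactly as many regions as the number of pieces into which that hyperplane is itself cut. Convexity of the components settles this, since it guarantees that each old region is split into at most two pieces and that the traces $H_i\cap H_n$ behave like a genuine hyperplane arrangement inside $H_n$. In particular, degenerate configurations (parallel, coincident, or concurrent hyperplanes) can only decrease the count and never violate the upper bound, so no genericity assumption is needed.
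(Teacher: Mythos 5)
Your proof is correct, but it takes a genuinely different route from the paper's. You run the classical sweep argument: add the hyperplanes one at a time, note that each component of the complement is a sign cell, i.e.\ an intersection of open half-spaces and hence convex, so the new hyperplane $H_n$ cuts each old region into at most two pieces, and the number of regions gained is exactly the number of connected components of $H_n\setminus(H_1\cup\dots\cup H_{n-1})$; the dimension induction bounds this by $C(x-1)n^{x-1}$, and summing the recurrence gives $C(x)n^x$. The paper instead works with the fixed arrangement all at once: it bounds the number of $i$-dimensional flats of the induced cellular decomposition by $n^{x-i}$ via surjections from $(x-i)$-tuples of hyperplanes, bounds by dimension induction the number of pieces each flat is cut into, and then invokes Euler's formula (Lemma \ref{euler}) to convert the resulting bounds on the cell counts $N_i$, $i<x$, into a bound on the number $N_x$ of regions. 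Your argument is more elementary and gives sharper constants (in general position the same recurrence yields the exact count $\sum_{i\leq x}\binom{n}{i}$), and your closing remark that degeneracies only lower the count is sound, since coincident or parallel hyperplanes merely yield fewer than $n-1$ genuine traces on $H_n$. What the paper's heavier route buys is the generalization flagged in Remarks \ref{rem10} and \ref{rem} and actually used in Section 5: the same proof must apply to arrangements of algebraic hypersurfaces of degree at most $2$, where regions are no longer convex, a single added quadric can split one region into many pieces, and only the flat-counting-plus-Euler-characteristic argument survives. So your proof fully establishes the lemma as stated, but it would need nontrivial modification to cover the degree-$2$ extension on which the paper's upper bound ultimately relies.
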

\begin{proof}
We will prove the assumption by induction on $x$. The induction hypothesis states that it is true for all $i<x$.\\

The hyperplanes $(H_i)$ induce a cellular decomposition of
$\mathbb{R}^x$. We will denote $N_i$ the number of cells of
dimension $i$ for $0\leq i\leq x$. We remark that $card I=N_x$. We
begin by obtaining an upper bound for $N_i$ for $0\leq i<d$. We
will see later that this is sufficient to finish the proof.

$\bullet$ Computation of $N_0$. We denote
$\mathcal{H}=\{H_1,\dots,H_n\}$, and consider the map
$$\phi_0:\mathcal{H}^x\rightarrow\{vertices\}\cup \emptyset$$
$$\phi_0:\ (H_{i_1},\dots,H_{i_x})\mapsto \begin{cases}H_{i_1}\cap\dots\cap
H_{i_x}\quad \text{If it is a point}\\
\emptyset\quad\text{otherwise} \end{cases}$$

This map is surjective, thus we deduce $N_0\leq n^x$. Hence the
induction is true for $x=1$.

$\bullet$ Let $\mathcal{E}_i$ the set of subspaces of dimension
$i$ which form the cells of dimension $i$ of the cellular
decomposition. We denote $E_i=card( \mathcal{E}_i$).

 Then the map
$$\mathcal{H}^{x-i}\rightarrow \mathcal{E}_i\cup\emptyset$$
$$(H_{j_1},\dots,H_{j_{x-i}})\mapsto \begin{cases}H_{j_1}\cap\dots\cap H_{j_{x-i}}\quad \text{if}
\quad dim H_{j_1}\cap\dots\cap H_{j_{x-i}}=x\\
\emptyset\quad\text{otherwise}
\end{cases}$$

is surjective by definition of the partition. We deduce for all
$i\leq x-1$: $E_{i}\leq n^{x-i}$.\\
Now it remains to know into how many pieces each space of
dimension $i$ is cut. Let $F\in \mathcal{E}_i$ and $H\in\mathcal{H}$, we
have $F\cap H=\begin{cases}F\\or\\ X\end{cases}$, where $X$ is a
subset of codimension 1 contained in $F$.

The hyperplanes which partition $F$ do not contain $F$, thus
their trace on $F$ is of codimension one. Thus the problem is
reduced to compute the number of connected components of the
partition of $F$ by $m\leq n$ hyperplanes.

The induction hypothesis implies that $N_F\leq C(i)n^i$. Then
$$max_{F\in \mathcal{E}_i}E_F\leq c(i)n^i.$$
We deduce
$$N_i\leq n^{x-i}c(i)n^i\leq c(i)n^x.$$

Euler's formula implies
$$N_x\leq1+\sum_{i=0}^{x-1}C(i)n^{x}.$$
$$N_x\leq Cn^x.$$
The induction process has been completed.
\end{proof}
\begin{corollary}\label{comb}
Let $P$ be a polyhedron of $\mathbb{R}^x$, let $(H_i)_{i\leq n}$ be a sequence of hyperplanes, and let $(\mathcal{Q}_i)_{i\in I}$ be the
connected components of $P\setminus H_1\cup\dots\cup
H_n$. Then there exists $C(x,P)>0$ such that:

$$card I\leq C(x,P)n^x.$$
 
\end{corollary}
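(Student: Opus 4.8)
The plan is to deduce the corollary from the preceding Lemma by absorbing the boundary of $P$ into the family of hyperplanes. A polyhedron $P\subset\mathbb{R}^x$ is delimited by finitely many faces of dimension $x-1$; let $G_1,\dots,G_k$ be the hyperplanes carrying these faces, so that $k=k(P)$ depends only on $P$. I would then consider the enlarged family $\{H_1,\dots,H_n,G_1,\dots,G_k\}$, an arrangement of $n+k$ hyperplanes in $\mathbb{R}^x$, and let $(\mathcal{R}_j)_{j\in J}$ denote the connected components of $\mathbb{R}^x\setminus(H_1\cup\dots\cup H_n\cup G_1\cup\dots\cup G_k)$. Applying the Lemma to these $n+k$ hyperplanes gives $\mathrm{card}\,J\leq C(x)(n+k)^x$.

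The heart of the argument is to produce an injection $I\hookrightarrow J$. Each component $\mathcal{Q}_i$ of $P\setminus(H_1\cup\dots\cup H_n)$ is a connected subset of $\mathbb{R}^x$ disjoint from every $H_\ell$. Since $P$ is full-dimensional and we have removed only the codimension-one sets $H_\ell$, every boundary point of $P$ lying off the $H_\ell$ is a limit of nearby interior points that lie in the same component; hence each $\mathcal{Q}_i$ meets $\mathrm{int}\,P$ and has nonempty interior. Choosing an interior point $q_i\in\mathcal{Q}_i$ that avoids the finitely many $G_m$ as well, the cell of $\mathbb{R}^x\setminus(\bigcup H_\ell\cup\bigcup G_m)$ containing $q_i$ is one of the $\mathcal{R}_j$, which defines the map $\mathcal{Q}_i\mapsto\mathcal{R}_{j(i)}$. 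For injectivity, suppose $q_i$ and $q_{i'}$ lay in a common cell $\mathcal{R}_j$: then a path joining them inside $\mathcal{R}_j$ avoids all $G_m$, hence stays in the cell of the $\{G_m\}$-arrangement containing $\mathrm{int}\,P$, and therefore inside $P$; since it also avoids all $H_\ell$, it connects $q_i$ to $q_{i'}$ within $P\setminus\bigcup H_\ell$, forcing $\mathcal{Q}_i=\mathcal{Q}_{i'}$. This yields $\mathrm{card}\,I\leq\mathrm{card}\,J$.

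Combining the two steps gives $\mathrm{card}\,I\leq C(x)(n+k)^x$, and it remains to absorb the constant $k$ into the exponent. For $n\geq1$ one has $n+k\leq(k+1)n$, so $(n+k)^x\leq(k+1)^x n^x$, whence $\mathrm{card}\,I\leq C(x)(k+1)^x\,n^x$. Setting $C(x,P):=C(x)(k+1)^x$ completes the proof, the finitely many components arising when $n=0$ being harmless after enlarging the constant.

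The step I expect to be the main obstacle is the injectivity of $\mathcal{Q}_i\mapsto\mathcal{R}_{j(i)}$, that is, controlling the behavior along $\partial P$: one must ensure that a component of the \emph{closed} region $P\setminus\bigcup H_\ell$ does not collapse onto a lower-dimensional part of the boundary, and that interior points chosen in two distinct $\mathcal{Q}_i$ genuinely fall into different cells of the refined arrangement. Throwing the bounding hyperplanes $G_m$ into the family is precisely what guarantees that the separation already present inside $P$ survives in $\mathbb{R}^x$, which is what makes the reduction to the Lemma legitimate.
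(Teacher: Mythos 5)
Your proposal is correct and follows exactly the paper's route: the paper's own proof is the one-line remark that one adds the hyperplanes forming the boundary of $P$ to the family and only the constant changes. You have simply filled in the details this sketch leaves implicit (the injection of components into cells of the enlarged arrangement and the absorption of $(n+k)^x$ into $C(x,P)n^x$), all of which is sound.
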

\begin{proof}
 We can apply the
same proof in the case where $P$ is a polyhedron: it suffices to add the
hyperplanes which form the boundary of $P$. In this case only the
constant $C$ changes.
\end{proof}
\begin{remark}\label{rem}
If we consider algebraic equations of {\it bounded} degree (by
$\delta$), the same proof works since an intersection of such
manifolds has a bounded number of connected components, and since
the Euler characteristic takes a finite number of values (see Remark \ref{rem10}), only
depending on $x$ and $\delta$.
\end{remark}

\subsection{Number theory}
A general reference for this section is \cite{Ha.Wr}.

\begin{definition}
Let $n$ be an integer, the invertible elements of
$\mathbb{Z}/n\mathbb{Z}$  are denoted by
$(\mathbb{Z}/n\mathbb{Z})^{*}$, and the cardinality of this set is
denoted $\phi(n)$, which is called the Euler's function.
\end{definition}

\begin{definition}
The Moebius function $\mu$ is defined by
\begin{equation*}
\begin{split}
\mu(1)&=1,\\
\mu(p_{1}..p_{k})&={(-1)}^k,\quad p_i\in\mathbb{P}\quad
\text{distinct primes},\\
\mu(n)&=0,\quad \text{if n has a square factor}.
\end{split}
\end{equation*}
\end{definition}

The multiplicative functions $\phi,\mu$ are linked by the
following classical property:
\begin{lemma}\label{mob}
For all positive integer $n$  the following holds:
\begin{equation*}
\begin{split}
\sum_{d|n}\mu(d)&=
\begin{cases}
1&\quad\text{if}\quad n=1,\\
0&\quad\text{else},
\end{cases}\\
\frac{\phi(n)}{n}&=\sum_{d|n}\frac{\mu(d)}{d}.
\end{split}
\end{equation*}
\end{lemma}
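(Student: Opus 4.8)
The plan is to prove the two identities in turn. For the first, $\sum_{d\mid n}\mu(d)$, I would begin by discarding all divisors that carry a square factor, since $\mu$ vanishes on them by definition; only the squarefree divisors contribute. Writing the prime factorization $n=p_1^{a_1}\cdots p_k^{a_k}$, the squarefree divisors are exactly the products of distinct subsets of $\{p_1,\dots,p_k\}$, and a subset of size $j$ produces a divisor with $\mu$-value $(-1)^j$. Grouping by size therefore gives $\sum_{d\mid n}\mu(d)=\sum_{j=0}^{k}\binom{k}{j}(-1)^j=(1-1)^k$, which is $0$ as soon as $k\geq 1$, i.e.\ as soon as $n>1$. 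In the remaining case $n=1$ the only divisor is $1$ with $\mu(1)=1$, so the sum equals $1$. This settles the first case distinction.

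For the second identity I would argue by multiplicativity. Both sides are multiplicative functions of $n$: the left-hand side $\phi(n)/n$ because $\phi$ is multiplicative, and the right-hand side $\sum_{d\mid n}\mu(d)/d$ because it is the divisor-sum of the multiplicative function $d\mapsto\mu(d)/d$, and the divisor-sum of a multiplicative function is again multiplicative. Since a multiplicative function is determined by its values on prime powers, it suffices to check equality at $n=p^a$. There the left side gives $\phi(p^a)/p^a=(p^a-p^{a-1})/p^a=1-\tfrac1p$, while the right side gives $\sum_{j=0}^{a}\mu(p^j)/p^j=1-\tfrac1p$, because $\mu(p^j)=0$ for $j\geq 2$ leaves only the $j=0$ and $j=1$ terms. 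The two expressions agree on every prime power, hence everywhere.

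An alternative route to the second identity is to deduce it from the first by M\"obius inversion applied to Gauss's relation $\sum_{d\mid n}\phi(d)=n$, which yields $\phi(n)=\sum_{d\mid n}\mu(n/d)\,d$; dividing by $n$ and reindexing the sum over divisors produces $\phi(n)/n=\sum_{d\mid n}\mu(d)/d$. I do not anticipate any genuine obstacle in this lemma, as it is entirely classical; the only points demanding a little care are the base case $n=1$ in the first identity and the verification that the divisor-sum of a multiplicative function is multiplicative in the second, both of which are routine.
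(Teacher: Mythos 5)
Your proof is correct, and both identities are established by the standard arguments: the binomial-theorem computation $\sum_{j=0}^{k}\binom{k}{j}(-1)^j=(1-1)^k$ for the first, and multiplicativity plus verification on prime powers for the second. Note that the paper itself gives no proof of this lemma at all --- it is stated as a classical property with the section's general reference \cite{Ha.Wr} (Hardy and Wright) standing in for a proof --- so there is nothing to compare against; your write-up simply supplies the classical argument that the authors chose to omit, and it does so correctly, including the alternative derivation of the second identity via M\"obius inversion of $\sum_{d\mid n}\phi(d)=n$.
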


Now we use the above Lemma to obtain the following result.
\begin{lemma}\label{nombre}
For all integer $p\geq 1$, there exists a $C>0$ such that for all
$n$ the following holds:
$$\sum_{l\leq n}S_l\geq Cn^{p+2},$$
where $S_l=\displaystyle\sum_{\substack{m\leq
l\\\gcd(m,l)=1}}m^{p}$.
\end{lemma}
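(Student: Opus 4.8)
The plan is to remove the coprimality constraint in $S_l$ by Möbius inversion, sum over $l$, and reduce everything to iterated power sums that can be estimated by comparison with integrals. By the first identity of Lemma~\ref{mob}, the quantity $\sum_{d\mid m,\ d\mid l}\mu(d)$ equals $1$ when $\gcd(m,l)=1$ and $0$ otherwise, so I may encode the coprimality condition and then substitute $m=dk$:
$$S_l=\sum_{m\le l}m^p\!\!\sum_{d\mid m,\ d\mid l}\!\!\mu(d)=\sum_{d\mid l}\mu(d)\,d^p\sum_{k\le l/d}k^p.$$
Summing over $l\le n$ and reorganizing the double sum by writing $l=dj$ (so that the condition $d\mid l$ becomes automatic and $l/d=j$), I obtain
$$\sum_{l\le n}S_l=\sum_{d\le n}\mu(d)\,d^p\sum_{j\le n/d}\Big(\sum_{k\le j}k^p\Big).$$

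Next I would estimate the inner iterated power sum. Comparison with integrals gives $\sum_{k\le j}k^p=\frac{j^{p+1}}{p+1}+O(j^p)$, and summing this over $j$ yields $\sum_{j\le J}\sum_{k\le j}k^p=\frac{J^{p+2}}{(p+1)(p+2)}+O(J^{p+1})$. Substituting $J=n/d$ splits the outer sum into a main term $\frac{n^{p+2}}{(p+1)(p+2)}\sum_{d\le n}\frac{\mu(d)}{d^2}$ together with an error. The decisive point is that the series $\sum_{d\ge1}\mu(d)/d^2$ converges to $1/\zeta(2)=6/\pi^2>0$; hence its partial sums stay bounded below by a positive constant once $n$ is large, which is precisely what keeps the main term genuinely of order $n^{p+2}$ rather than collapsing by cancellation.

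The main obstacle is controlling the error terms, since the weights $\mu(d)\,d^p$ alternate in sign and the bound on an individual term is large. I would handle this by observing that the contribution of the $O((n/d)^{p+1})$ remainders is $\sum_{d\le n}d^p\,O\big((n/d)^{p+1}\big)=O(n^{p+1})\sum_{d\le n}\frac1d=O(n^{p+1}\log n)$, while the tail $\sum_{d>n}\mu(d)/d^2=O(1/n)$ contributes only $O(n^{p+1})$ to the main term; both are $o(n^{p+2})$. Consequently, for all sufficiently large $n$,
$$\sum_{l\le n}S_l\ge\frac{1}{2}\cdot\frac{6}{\pi^2(p+1)(p+2)}\,n^{p+2}.$$
Since the left-hand side is strictly positive for every $n\ge1$, the constant $C$ can then be shrunk to absorb the finitely many small values of $n$, yielding the claimed bound $\sum_{l\le n}S_l\ge Cn^{p+2}$ for all $n$.
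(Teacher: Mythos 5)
Your proof is correct and follows essentially the same route as the paper: Möbius inversion to remove the coprimality condition, an interchange of summation via $l=dj$, power-sum asymptotics, and the positivity of $\sum_{d\geq 1}\mu(d)/d^2=1/\zeta(2)$, with all remainders absorbed into $O(n^{p+1}\log n)$. The only differences are bookkeeping: you sum over $l$ before estimating (so you never need $\sum_{d\mid l}\mu(d)=0$, which the paper uses to kill a lower-order term), and you explicitly absorb the finitely many small $n$ into the constant $C$, a step the paper leaves implicit.
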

We give a proof of the Lemma for the sake of the completeness. The integer part
is denoted by $E()$.
\begin{proof}
By Lemma \ref{mob} we have
$$S_l=\sum_{\substack{m\leq
l\\\gcd(m,l)=1}}m^{p}=\sum_{m\leq l}\sum_{d|m,d|l}\mu(d)m^p,$$
$$S_l=\sum_{k\leq l/d}\mu(d)k^pd^p,$$
$$S_l=\sum_{d|l}\mu(d)d^p[C_{p+1}(\frac{l}{d})^{p+1}+C_{p}(\frac{l}{d})^{p}+O(\frac{l}{d})^{p-1}],$$
$$S_l=C_{p+1}l^{p+1}\sum_{d|l}\mu(d)/d+C_pl^p\sum_{d|l}\mu(d)+l^{p-1}[\sum_{d|l}\mu(d)O(1/d)],$$
Then we have
$$|\sum_{d|l}\mu(d)O(1/d)|\leq C\ln{d},$$
and by Lemma \ref{mob}:
$$\sum_{d|l}\mu(d)=0 \quad \text{if} \quad l\neq 1.$$
We deduce
$$S_l=C_{p+1}l^{p+1}\sum_{d|l}\mu(d)/d+O(l^{p-1}\ln{l}).$$

$$\sum_{l\leq n}S_l=\sum_{l\leq n}Cl^{p+1}\sum_{d|l}\mu(d)/d+\sum_{l\leq n}O(l^{p-1}\ln{l}),$$
$$\sum_{l\leq n}S_l=\sum_{d\leq n}C\mu(d)/d\sum_{n'\leq
n/d}n'^{p+1}d^{p+1}+O(n^{p}\ln{n}),$$
$$\sum_{l\leq n}S_l= C\sum_{d\leq
n}\mu(d)d^pE(\frac{n}{d})^{p+2}+O(n^{p}\ln{l}),$$
$$\sum_{l\leq n}S_l= C\sum_{d\leq
n}\mu(d)d^p[\frac{n}{d}-\{\frac{n}{d}\}]^{p+2}+O(n^{p}\ln{l}),$$
$$\sum_{l\leq n}S_l= C\sum_{d\leq
n}\mu(d)d^p[(\frac{n}{d})^{p+2}+
O((\frac{n}{d})^{p+1})]+O(n^{p}\ln{l}),$$

$$\sum_{l\leq n}S_l= Cn^{p+2}\sum_{d\leq
n}\mu(d)/d^2 +Cn^{p+1}\sum_{d\leq n}\mu(d)/d +O(n^{p}\ln{l}),$$
$$\sum_{l\leq n}S_l= Cn^{p+2}\sum_{d\leq
n}\mu(d)/d^2 +n^{p+1}O(\log{n}) +O(n^{p}\ln{l}).$$ The series of
general term $\mu(d)/d^2$ is absolutely convergent and its sum is
$\frac{1}{\zeta(2)}$ which is positive \cite{Ha.Wr}, thus we
deduce:
$$\sum_{l\leq n}S_l\geq Cn^{p+2}.$$
\end{proof}
\section{Preliminary results}
\begin{definition}\label{defdiag}
A diagonal $\gamma_{A,B}$ between two faces $A,B$ of the cubic tesselation, of dimensions less than $d-2$,
is the set of (oriented) segments which start from $A$ and stop
in $B$.
\end{definition}
\begin{definition}
We introduce the following order on the faces \!: two faces $A$
and $B$ verify $A<B$ if and only if \!: each oriented segment,  from $A$ to $B$, is such that in the unfolding, the associated
vector has positive coefficients. The diagonals are of several
types due to the dimension of $A,B$. We call a diagonal between
the faces $A,B$ a positive diagonal if we have $B>A$. If we attach
a superscript $+$ to an object, then it will consist of positive
diagonals.
\end{definition}

\begin{definition}
 We say that two faces $A,B$ are at combinatorial length
$n$ if each orbit segment between $A,B$ passes through $n$ cubes.
We denote the length by $d(A,B)=n$.
\end{definition}
This definition can be made since we are in the hypercube. In
other polyhedron it is not well defined.
\begin{definition}
If the faces $A,B$ of the cubic tesselation fulfill $d(A,B)=n$ then the diagonal
$\gamma_{A,B}$ is of combinatorial length $n$.
 We denote the set of these diagonals by $Diag(n)$.
\end{definition}

\begin{figure}[h]
\includegraphics[width= 3.5cm]{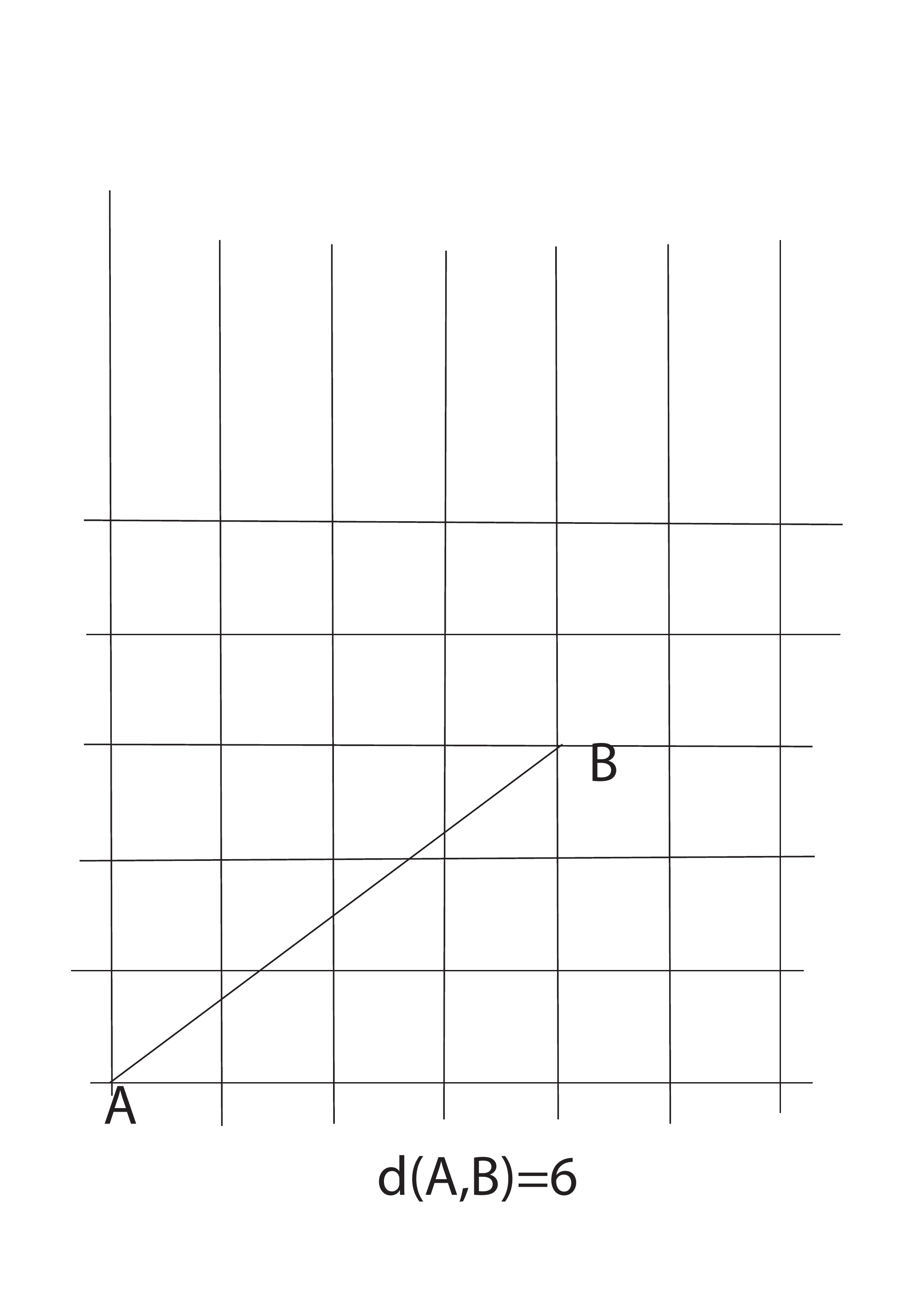}
\caption{Words of billiard}
\end{figure}

{\bf Notations.} In the following we only consider diagonals of
combinatorial length $n$ whose initial segment is in the cube
$[0,1]^d$. If a diagonal is a positive diagonal, it implies that
the final edge is in $\mathbb{R_+}^d$.

We denote the fact that an orbit in the diagonal $\gamma$ has code
$v$ by $v\in\gamma$. We consider the bispecial words such that, in
the unfolding, the associated trajectories are in
$\mathbb{R_+}^{d}$, and not in one of the $d$ coordinates planes,
we denote these words by $\mathcal{BL}(n,d)^+$.

In the following we call octant a proper subspace of
$\mathbb{R}^d$ of the form $I_1\times\dots\times I_d$ where $I_i$
is equal to $\mathbb{R}^-$ or to $\mathbb{R}^+$.
\begin{definition}
Let $v$ be a billiard word, we define the cell of $v$, by the
subset $\{(m,\omega)\in\partial{P}\times \mathbb{R}P^{d-1}\}$ such
that for all $0\leq i\leq |v|-1$, $T^i(m,\omega)\cap
\partial{P}$ is in the face labelled by $v_i$.
\end{definition}

The aim of this section is to show:
\begin{proposition}\label{pr}
With the preceding notations, there exists $C>0$ such that
\begin{equation*}
\frac{1}{2^d}\sum_{v\in\mathcal{BL}(n,d)}{i(v)}=
\sum_{v'\in\mathcal{BL}^{+}(n,d)}{i(v')}+O(|s(n+1,d-1)-s(n,d-1)|),
\end{equation*}

$$\sum_{\gamma\in
Diag(n)}\sum_{v\in\gamma}1\leq\sum_{\mathcal{BL}^{+}(n,d)}{i(v')}\leq
C\sum_{\gamma\in Diag(n)}\sum_{v\in\gamma}1,$$ where
$s(n,d)=p(n+1,d)-p(n,d)$.
\end{proposition}
For the proof we need the following lemmas.

\begin{lemma}\label{unicode}
We consider a word $v$ in $\mathcal{L}(n,d)$ with $n\geq 2$,
consider the unfolding of the billiard trajectories which are
coded by $v$ and start inside the cube $[0;1]^d$. Then for all $i,
2\leq i\leq n$, there exists only one face corresponding to the
letter $v_i$.
\end{lemma}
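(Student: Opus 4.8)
The plan is to pass to the unfolding and read off each face from the combinatorics of the prefix $v_1\cdots v_i$. After unfolding, a trajectory coded by $v$ is a straight ray $p+t\omega$ with $p$ in the open cube $(0,1)^d$, and the faces it meets, ordered by the crossing time $t>0$, are cells of the integer hyperplanes $\{x_l=k\}$; the letter $v_i$ records the coordinate $l$ perpendicular to the hyperplane crossed at step $i$. For a coordinate $l$ write $k_l(i)=\#\{\,j\le i: v_j=l\,\}$ for the number of type-$l$ crossings up to step $i$. I would first reduce to the case where $\omega$ lies in the positive octant: the reflections $x_l\mapsto 1-x_l$ preserve the tessellation, fix the cube $[0,1]^d$, send letters to letters and permute the $2^d$ sign patterns of $\omega$, so it suffices to treat one octant (this is the symmetry responsible for the factor $2^d$ in Proposition \ref{pr}).

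With $\omega$ positive and $p\in(0,1)^d$, every coordinate $x_l(t)$ is strictly increasing, so the $m$-th type-$l$ crossing is exactly the passage through $\{x_l=m\}$. Hence at step $i$, writing $l=v_i$ and $m=k_l(i)$, the trajectory crosses the hyperplane $\{x_l=m\}$, and this integer $m$ depends only on the prefix $v_1\cdots v_i$, not on $(p,\omega)$. It remains to pin down the cell inside this hyperplane, i.e.\ the integer box containing the other coordinates at the crossing time. For $l'\ne l$ the value $x_{l'}$ at that instant lies in $\bigl(k_{l'}(i-1),\,k_{l'}(i-1)+1\bigr)$, because exactly $k_{l'}(i-1)$ type-$l'$ crossings have occurred strictly before step $i$. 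Thus the face met at step $i$ is the single cell
$$\{x_l=m\}\cap\prod_{l'\ne l}\bigl(k_{l'}(i-1),\,k_{l'}(i-1)+1\bigr),$$
which is a function of $v_1\cdots v_i$ alone. This establishes the asserted uniqueness.

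Alternatively, and more in the spirit of Lemma \ref{proj}, I would argue by induction on $d$. The base case $d=1$ is the interval billiard, where the face reached after $m$ reflections is the point $\{x=m\}$. For the inductive step, project orthogonally onto the coordinate hyperplane $\{x_l=0\}$: by Lemma \ref{proj} the image is a billiard in a $(d-1)$-cube whose code is $v$ with all occurrences of $l$ erased, so by induction the transverse cell is uniquely determined; combining the perpendicular count $m=k_l(i)$ with this projected cell reconstructs the face. Either route reduces the statement to elementary counting once the octant is fixed.

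The main obstacle is the sign/octant bookkeeping rather than the counting itself: the code alone does not see the sign of $\omega$, since a reflected trajectory carries the very same word while meeting the mirror-image faces, so without the reduction to a fixed octant the face at a step is determined only up to this reflection symmetry. Once the octant is fixed, the argument above gives determination for every $i\ge 1$; the index $i=1$, where the ball merely enters its first neighbouring cube, is the initial entry face that is handled separately, which is why the clean statement is made for the range $2\le i\le n$ with $n\ge 2$, and the interior hypothesis $p\in(0,1)^d$ is what anchors all the counts $k_{l'}(i-1)$ without an off-by-one ambiguity.
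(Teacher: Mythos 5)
Your proof is correct, but it takes a genuinely different route from the paper's. The paper argues by contradiction: it first asserts that for $n\geq 2$ the directions in the cell of $v$ lie in a single octant, then looks at the \emph{first} index $j$ at which two distinct faces could carry the letter $v_j$, and notes that two distinct faces with the same letter, reached by lines issuing from the common face of step $j-1$, would force the two directions into different octants --- contradiction. You instead give a direct, constructive argument: after normalizing to the positive octant, every coordinate is strictly monotone along the unfolded ray, so with $l=v_i$ the face crossed at step $i$ is exactly $\{x_l=k_l(i)\}\cap\prod_{l'\neq l}\bigl(k_{l'}(i-1),k_{l'}(i-1)+1\bigr)$, an explicit function of the prefix $v_1\cdots v_i$ alone (your inductive variant via Lemma \ref{proj} is a third valid route). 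Your version buys more: it not only proves uniqueness but exhibits the face, and it sidesteps the paper's opening claim that the cell's directions occupy a single octant --- a claim which, read literally, is fragile (for the word $00$ in the square the admissible directions straddle two octants through the vertical, even though the faces met from step $2$ on coincide), whereas the paper's argument buys brevity by localizing everything at the first discrepancy. The one point to state carefully is your octant reduction: the reflections $x_l\mapsto 1-x_l$ show the situation is equivariant under the reflection group, so your argument yields uniqueness of the face \emph{per fixed octant}, i.e.\ uniqueness up to this symmetry in general; this is the honest content of the lemma (your $00$-type example with $\omega_l$ of either sign shows the unnormalized statement cannot hold verbatim) and it is exactly the form needed later, since in Lemma \ref{surjcu} the words of $\mathcal{BL}(n,d)^+$ have unfoldings confined to $\mathbb{R}_+^d$, where the octant is genuinely fixed.
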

\begin{proof}
First we consider the intersection of the cell of $v$ with
$\mathbb{R}P^{d-1}$. This set is a proper subset of an octant since
$n\geq 2$. Now we make the proof by contradiction. We consider the
first times $j$ where two different faces appear. There exist two
lines starting form a face (corresponding to $v_{j-1}$) which pass
through these two different faces. These faces are different but
are coded by the same letter, thus they are in two different
hypercubes. Thus the two directions are in different octant,
contradiction.
\end{proof}
\begin{figure}
\includegraphics[width= 4.2cm]{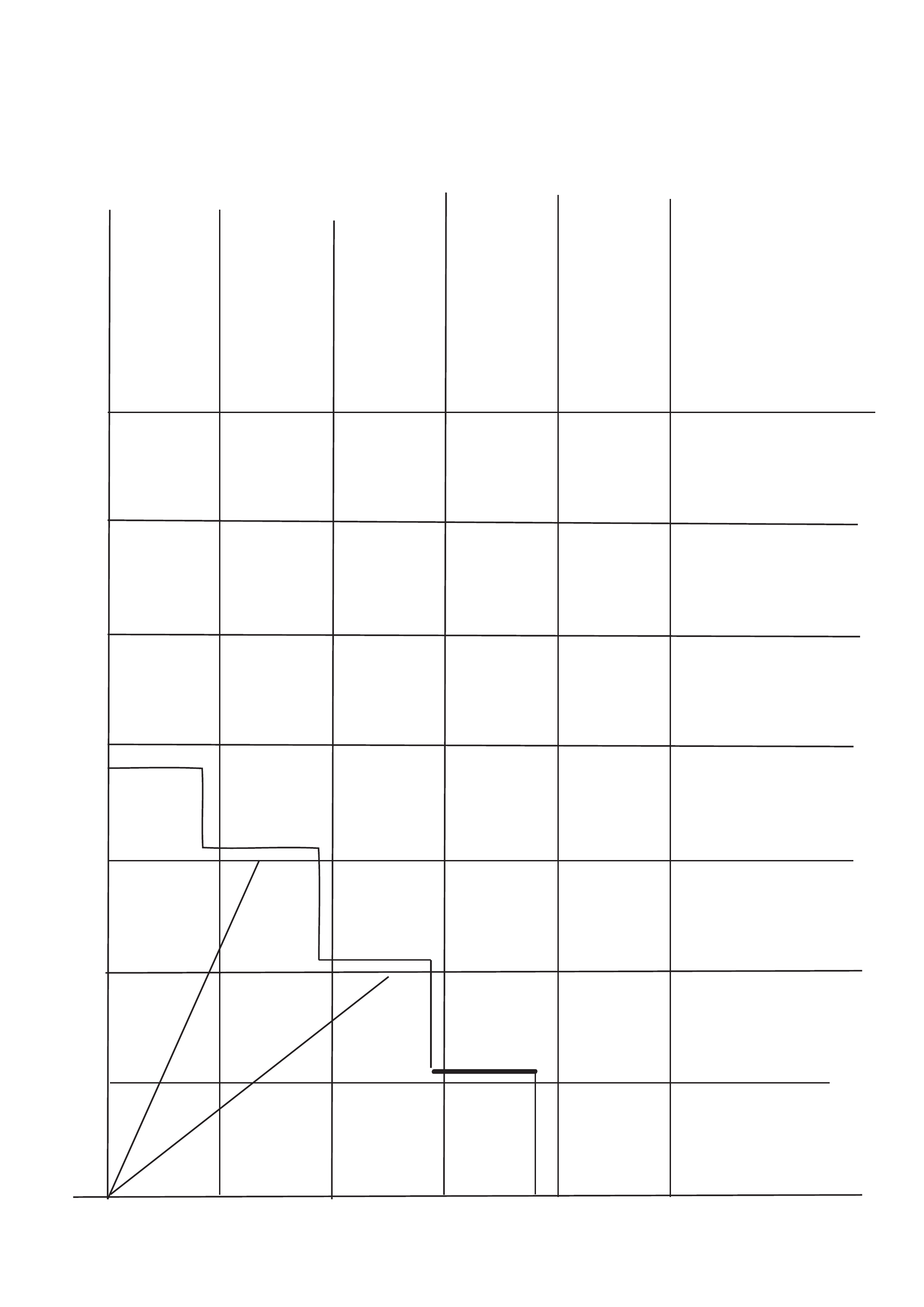}
\caption{Words of billiard}
\end{figure}
In this figure we show two billiard words in the square. The path
represents the faces at length $n$ of the initial square. In the
figure we have $n=3.$ The two words are coded by $001$ and $101$,
if we code the horizontal lines by $0$, and the vertical lines by
$1$.

\begin{lemma}\label{surjcu}
Let $v$ be a word in $\mathcal{BL}(n,d)^+$, then there exists
only one positive diagonal associated to this word.
\end{lemma}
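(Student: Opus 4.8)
The plan is to recover both endpoints of the positive diagonal directly from the word, so that the diagonal $\gamma_{A,B}$ with $v\in\gamma_{A,B}$ is forced to be unique. The guiding principle is the geometric meaning of bispeciality: a word is left special exactly when the orbit segments coded by $v$, extended one step backwards, emanate from at least two distinct full faces, and in the unfolding this branching of the preceding letter can occur only when the starting point sits on a face of codimension at least $2$, namely an intersection of coordinate hyperplanes. Symmetrically, right speciality forces the terminal point of the segment onto a face of codimension at least $2$. Thus a bispecial word $v$ is carried by orbit segments whose two endpoints lie on faces $A$ and $B$ of dimension less than $d-1$; and since $v\in\mathcal{BL}(n,d)^{+}$ the direction lies in the open positive octant with the initial segment in $[0,1]^d$, so that $A$ lies on hyperplanes $\{x_k=0\}$ and $B>A$ lies in $\mathbb{R}_+^d$. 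This produces a positive diagonal $\gamma_{A,B}$ with $v\in\gamma_{A,B}$ and settles existence.

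For uniqueness, suppose $\gamma_{A,B}$ and $\gamma_{A',B'}$ are positive diagonals both carrying the code $v$. First I would fix the terminal face. By Lemma \ref{unicode}, every orbit coded by $v$ and starting in $[0,1]^d$ crosses, at each position $2\leq i\leq n$, one and the same face of the tessellation; consequently the two diagonals run through the same sequence of cubes from the first crossing onward and stop on the same face of codimension at least $2$ at combinatorial length $n$. This yields $B=B'$.

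It remains to identify the starting face, and this is where I expect the main difficulty, since Lemma \ref{unicode} controls only the positions $2,\dots,n$ and says nothing about the first letter. Here I would exploit positivity together with the constraint that the initial segment lies in $[0,1]^d$: in the positive octant the unfolded trajectory is monotone in each coordinate, so running the now-determined corridor backwards from $B=B'$ into the cube $[0,1]^d$ singles out exactly one face of $\{x_k=0\}$ of codimension at least $2$ from which a positive segment of combinatorial length $n$ can originate. This forces $A=A'$, and therefore $(A,B)=(A',B')$, so the positive diagonal associated to $v$ is unique.
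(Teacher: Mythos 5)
Your existence paragraph is essentially the paper's picture, but your uniqueness argument has two genuine gaps, and both occur because you never use the extension letters of the bispecial word, which is the actual engine of the paper's proof. For $B=B'$: Lemma \ref{unicode} only asserts that each letter $v_i$, $2\leq i\leq n$, corresponds to a unique face of dimension $d-1$, so it determines the corridor of cubes traversed; it says nothing about which face of codimension at least $2$ of the \emph{last} cube the diagonal terminates on, since $B$ is not one of the faces coded by a letter of $v$. Concretely, in $d=3$, if $v$ admits three right extensions, the final vertex and each of the three edges through it sit at the end of the same corridor, and segments coded by $v$ can terminate on any of them; ``same corridor, hence same terminal face'' simply does not follow. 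What pins $B$ down is the set of right-extension letters: applying Lemma \ref{unicode} to the extended words $vl$ shows each suffix letter $l$ corresponds to a unique face, these faces all lie in a single hypercube and carry distinct letters, hence (by definition of the coding) they intersect, and $B$ is that intersection. This is the paper's claim and its proof, and it is the step your corridor argument replaces with an assertion.

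The step $A=A'$ has the same defect in aggravated form: you flag it as the main difficulty and then resolve it by fiat. Monotonicity in the positive octant plus running the corridor backwards does not single out one face: the cube $[0,1]^d$ has $\binom{d}{2}$ codimension-$2$ faces on the coordinate hyperplanes through the origin (in $d=3$, three edges and the vertex), and several can emit positive segments threading the same corridor; nothing in your argument rules this out, and nothing in it exploits the left-special structure of $v$, which is what actually determines $A$. The paper treats prefix and suffix symmetrically: $A$ is the intersection of the faces attached to the distinct prefix letters $u$ with $uv\in\mathcal{L}(n+1)$, exactly as $B$ is built from the suffix letters, so no separate backwards-monotonicity argument is needed or available. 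Note finally that once $A$ and $B$ are constructed from the extension data of $v$, uniqueness is automatic --- the diagonal is a function of the word --- which is precisely the well-definedness of the map $f:\mathcal{BL}(n,d)^+\rightarrow Diag^+(n)$ required in the proof of Proposition \ref{pr}; your stronger formulation (any two positive diagonals carrying $v$ coincide) is both harder than what is needed and not established by your argument.
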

\begin{proof}
If we study the unfolding of a trajectory associated to $v$, the
fact that we consider only words in $\mathcal{BL}(n,d)^+$ (and not
in $\mathcal{BL}(n,d)$) implies that there are at most $d$ choices
for the suffix of $v$ in the octant $\mathbb{R}^d_+$ (a suffix is
a letter $l$ such that $vl$ is a word), and the same result for
the prefix.

We consider the faces related to the suffix letter. We claim that
these faces have a non-empty intersection: By Lemma \ref{unicode}
these faces are in a same hypercube. They correspond to different
letters of the coding, thus these faces intersect (by definition
of the coding). The claim is proved.

 Those faces have a non-empty intersection, if we consider the same intersection with
the prefix, we have built a diagonal associated to this word, and
by construction it is unique.
\end{proof}

\begin{definition}
We call discontinuity a set of points $X=\{(m,\omega), m\in A\}$
in the phase space such that $A$ is a face, and such that their
orbits intersect another face of dimension $d-2$.
\end{definition}

Let us remark that a discontinuity is of dimension at most
$2(d-1)$, and that a diagonal is in the intersection of two
discontinuities.

\begin{lemma}\label{i(v)}
Consider a diagonal $\gamma$ between two faces $A,B$ of dimensions
$i,j$. Then for all word $v\in\gamma$, see Notations, we have
$$d^2\geq i(v)\geq 1.$$
\end{lemma}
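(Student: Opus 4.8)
The plan is to control the three multiplicities $m_l(v),m_r(v),m_b(v)$ entering the formula $i(v)=m_b(v)-m_r(v)-m_l(v)+1$ of Lemma \ref{julien}. The upper bound is immediate: since parallel faces are identified, the coding alphabet has exactly $d$ letters, so any word admits at most $d$ left extensions and at most $d$ right extensions, i.e.\ $m_l(v)\le d$ and $m_r(v)\le d$. A bi-extension is a pair of such letters, hence $m_b(v)\le m_l(v)\,m_r(v)\le d^2$; combined with $m_l(v)+m_r(v)\ge 1$ this yields $i(v)\le m_b(v)\le d^2$.

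For the lower bound I would establish the \emph{product structure} $m_b(v)=m_l(v)\,m_r(v)$, i.e.\ that every left extension can be combined with every right extension: if $uv\in\mathcal{L}$ and $vw\in\mathcal{L}$, then $uvw\in\mathcal{L}$. Granting this, and using that $v$ is bispecial (the endpoints $A,B$ have dimension at most $d-2$, so the unfolded trajectory genuinely branches at both ends and $m_l(v),m_r(v)\ge 2$), one obtains $i(v)=\bigl(m_l(v)-1\bigr)\bigl(m_r(v)-1\bigr)\ge 1$, which is the desired inequality (and incidentally sharpens the upper bound to $(d-1)^2$).

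To prove the product structure I would exploit that the two ends of the diagonal are independent. Unfolding the trajectories coded by $v$ and starting in $[0,1]^d$, Lemma \ref{unicode} shows that the faces met at the intermediate positions $2\le k\le n$ are rigidly fixed by $v$; thus the only combinatorial freedom lies in how the straight line enters near $A$ and how it leaves near $B$. The letter $u$ prepended to $v$ is determined solely by the local configuration of the line at $A$, and the letter $w$ appended is determined solely by its configuration at $B$. Since, as observed before the statement, a diagonal is the transverse intersection of the two discontinuities attached to $A$ and to $B$, the entry data at $A$ and the exit data at $B$ are carried by independent parameters of the line. Hence the sub-cell $C_u$ of the cell of $v$ selected by the constraint ``left letter $=u$'' and the sub-cell $C^{w}$ selected by ``right letter $=w$'' are cut out by conditions bearing on disjoint families of parameters, so $C_u\cap C^{w}\neq\emptyset$ and $uvw$ is admissible.

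The main obstacle is precisely this decoupling step: one must verify that prescribing the combinatorics of the line near $A$ imposes no constraint on its combinatorics near $B$. I would make it rigorous by parametrizing each line of the diagonal by its two intersection points with fixed transverse affine sections placed just outside $A$ and just outside $B$; the condition defining $u$ then involves only the first point and the condition defining $w$ only the second, and the openness of these finitely many polyhedral conditions yields the nonempty intersection, completing the proof of $i(v)\ge 1$.
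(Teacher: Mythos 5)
Your proposal is correct and takes essentially the same approach as the paper: there, too, the lower bound comes from the product structure $m_b(v)=m_l(v)\,m_r(v)$, proved by perturbing the two endpoints $a\in A$, $b\in B$ of a diagonal segment independently inside the open cell of $v$, together with $m_l(v)\geq d-i\geq 2$ and $m_r(v)\geq d-j\geq 2$ from the fact that a face of dimension $j$ lies in at least $d-j$ faces of dimension $d-1$, yielding $i(v)=(m_l(v)-1)(m_r(v)-1)\geq 1$. Your parametrization by intersection points with two transverse sections is just a slightly more explicit formulation of the paper's independent endpoint-perturbation step, and the upper bound is handled the same way (the paper dismisses it as obvious).
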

\begin{proof}
Consider a bispecial billiard word $v$, the cell of $v$ in the
phase space is an open set. It means that if a trajectory has $v$
for coding, a small perturbation of $v$ has still $v$ for coding.

 A face of dimension $d-2$ is at the intersection of two faces of dimension
$d-1$, thus the face of dimension $j$ is at the intersection of at
least $d-j$ faces, and we deduce $m_r(v)\geq d-j$ by perturbation.
The same method shows that $m_l(v)\geq d-i$. Now by definition of
diagonal, see Definition \ref{defdiag}, the diagonal is in the interior of
the cell of $v$ in the phase space. Moreover the cell of $v$ is an
open set. Now consider a segment $[a;b]$ inside the diagonal with
$a\in A$. There exists an open set near $a$ such that for all $a'$
inside the segment $[a';b]$ is still coded by $v$. Now there
exists a neighborhood of $b$ such that for all $b'$ the segment
$[a';b']$ has $v$ for coding. This implies $m_b(v)=m_r(v)m_l(v)$.
Finally we obtain $i(v)=(m_l(v)-1)(m_r(v)-1)\geq 1.$ The other
inequality is obvious.
\end{proof}

\subsection{Proof of Proposition \ref{pr}}
First we remark that the symmetries of $\mathbb{R}^d$ implies that
$\sum_{v\in\mathcal{BL}(n,d)}{i(v)}$ is the same for each octant.

Now we are interested in the bispecial words which are neither in
$\mathcal{BL}(n,d)^+$ nor in one of the symmetric sets. Their
unfolding is in $[0,1]^{d-1}\times\mathbb{R}^+$. Thus for each
coordinates plane their number is equal to the number of bispecial
words of the cube of dimension $d-1$. Lemma \ref{julien} implies:
$$\displaystyle\sum_{v\notin \mathcal{BL}(n,d)^+}i(v)\leq
C|s(n+1,d-1)-s(n,d-1)|\quad C\in\mathbb{R}.$$

We consider the map $$f\!:\mathcal{BL}(n,d)^+ \rightarrow
Diag^+(n)$$
$$f:v\mapsto\gamma$$

Lemma \ref{surjcu} implies that $f$ is well defined and onto, thus
$$card(\mathcal{BL}(n,d)^+)=\sum_{\gamma \in Diag(n)}{card(f^{-1}(\gamma))}.$$

Then we obtain
$$\sum_{\mathcal{BL}(n,d)^+}{i(v')}=\sum_{\gamma \in Diag(n)}\sum_{v\in\gamma}i(v).$$

Now we must bound $i(v)$ for each $v\in\gamma$. This is a
consequence of Lemma \ref{i(v)}.

\section{Equations of diagonals}
In these section we give in Lemma \ref{cal} the equations of a
diagonal, we deduce in Proposition \ref{calcul} that several
diagonals can not overlay, and we finish the section by a
description of the diagonals of fixed combinatorial length. Remark
that these equations are homogeneous in $\omega$.
\begin{lemma}\label{cal}
Let $A,B$ two faces of dimension $d-2$, we consider
$$\gamma_{A,B}=\{(m,\omega)\in\mathbb{R}^{d}\times\mathbb{R^*}^{d}, m\in
A, m+\mathbb{R}\omega\cap B\neq\emptyset\}.$$ Then $\gamma_{A,B}$
has one
of the following equation\\
$(1)\bullet n\omega_i=p\omega_j$, with $n,p\in\mathbb{N}$.\\
$(2)\bullet m_i+\frac{n\omega_i}{\omega_j}=p$ with $n,p\in\mathbb{N}$.\\
$(3)\bullet \omega_jm_i-\omega_im_j=n\omega_i+p\omega_j$ with
$n,p\in\mathbb{N}$.
\end{lemma}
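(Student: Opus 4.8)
The plan is to describe each $(d-2)$-face by the two coordinates it freezes, to write the incidence condition ``the line $m+\mathbb{R}\omega$ meets $B$'' as the equality of the two values of the time parameter forced by $B$, and then to read off the three listed forms according to how the frozen coordinates of $A$ and $B$ overlap.

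First I would recall that a face of dimension $d-2$ of the cubic tessellation is cut out by fixing two of the coordinates to integers, the remaining $d-2$ coordinates ranging in unit intervals. Hence $A$ is given by a pair of indices $I=\{i_1,i_2\}$ together with integers $a_{i_1},a_{i_2}$, and $B$ by a pair $J=\{j_1,j_2\}$ together with integers $b_{j_1},b_{j_2}$; for $m\in A$ the coordinates indexed by $I$ are frozen to the $a$'s, while the coordinates outside $I$ are free. The line $m+t\omega$ meets $B$ precisely when there is a single $t$ with $m_{j_1}+t\omega_{j_1}=b_{j_1}$ and $m_{j_2}+t\omega_{j_2}=b_{j_2}$. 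Since $\omega$ has all its coordinates nonzero, I may solve each relation for $t$ and eliminate it, obtaining the single defining equation $\omega_{j_2}(b_{j_1}-m_{j_1})=\omega_{j_1}(b_{j_2}-m_{j_2})$ of $\gamma_{A,B}$.

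The statement then follows by a case analysis on $|I\cap J|\in\{0,1,2\}$. If $I\cap J$ has two elements, then both $m_{j_1},m_{j_2}$ are frozen, the equation loses its dependence on $m$ and becomes $(b_{j_1}-a_{j_1})\omega_{j_2}=(b_{j_2}-a_{j_2})\omega_{j_1}$, which is form $(1)$. If $I\cap J$ is a single index $k$, writing $\ell$ for the other element of $J$ (which is free in $m$) and substituting $t=(b_k-a_k)/\omega_k$ into the $\ell$-relation gives $m_\ell+\dfrac{(b_k-a_k)\omega_\ell}{\omega_k}=b_\ell$, which is form $(2)$. If $I\cap J$ is empty, then neither $m_{j_1}$ nor $m_{j_2}$ is frozen and rearranging the defining equation yields $\omega_{j_1}m_{j_2}-\omega_{j_2}m_{j_1}=\omega_{j_1}b_{j_2}-\omega_{j_2}b_{j_1}$, which is form $(3)$. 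In every case the constants are differences or values of the integer positions of the two faces.

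I expect the only delicate point to be the bookkeeping: keeping precise track of which coordinates of $m$ are frozen and which are free, and matching the indices to the normalized roles $(i,j)$ appearing in the statement. A secondary point is the sign convention, since the constants produced are the integer coordinate differences $b-a$ and the integer values $b$; for the positive diagonals to which the lemma is applied (initial segment in $[0,1]^d$ and $B$ in the positive octant) these quantities are nonnegative, so the constants $n,p$ can indeed be taken in $\mathbb{N}$.
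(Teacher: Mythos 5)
Your proof is correct and takes essentially the same approach as the paper: you parametrize the line, impose the two integer-frozen coordinates of $B$ to eliminate the time parameter, and obtain the three forms by a case analysis on how $B$'s frozen index pair overlaps $A$'s, which is exactly the paper's trichotomy after its normalization placing $A$'s frozen coordinates at positions $d-1,d$ with value $0$. Your unified elimination identity and the closing remark justifying $n,p\in\mathbb{N}$ for positive diagonals are only minor presentational refinements of the same argument.
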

\begin{proof}
First we can assume that the point $m\in A$ have coordinates of
the following form
$$\begin{pmatrix}
m_1\\ \vdots\\ m_{d-2}\\ 0\\0
\end{pmatrix}.$$
Then each point of $B$ have two coordinates equal to integers
$n,p$. Thus its coordinates are of the form:
$$B:\begin{pmatrix}
b_1\\ \vdots\\n\\ \vdots\\ p\\ \vdots\\ b_{d-2}\end{pmatrix}.$$ If
the line $m+\mathbb{R}\omega$ intersects $B$ it means that there
exists $\lambda$ such that $m+\lambda\omega\in B$. Then there are
three choices, depending on the position of $n,p$ in the
coordinates. \\
$\bullet$ If $n,p$ are at positions $d-1,d$ we obtain a system of
the form
$$\begin{cases}
\lambda\omega_{d-1}=n\\ \lambda\omega_d=p
\end{cases}$$
This gives equation $(1)$.\\

$\bullet$If $n$ is at a position $i$ less or equal than $d-2$, and
$p$ is at position $d-1$ or $d$, we obtain
$$\begin{cases}
\lambda\omega_{d-1}=p\\
m_{i}+\lambda\omega_{i}=n
\end{cases}$$
This gives the second equation.\\

$\bullet$ If $n$ and $p$ are at position less than $d-2$, we are
in case (3).
\end{proof}
\begin{proposition}\label{calcul}
Let $A,B,C_i, i=1\dots l$ be $l+2$ faces of dimension $d-2$. We
deduce the equivalence
$$\gamma_{A,B}=\bigcup_{i}\gamma_{A,C_i}\Longleftrightarrow A,B,C_i\ \text{are contained in a
hyperplane of}\quad \mathbb{R}^d.$$
\end{proposition}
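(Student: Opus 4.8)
The plan is to realize every diagonal issued from the fixed face $A$ as a relatively open, full-dimensional piece of an explicit algebraic hypersurface in the phase space, and then to translate the set-theoretic identity $\gamma_{A,B}=\bigcup_i\gamma_{A,C_i}$ into a statement about these hypersurfaces and about incidences of lines with $(d-2)$-flats. Fixing $A$, the relevant variables are $(m,\omega)$ with $m\in\mathrm{aff}(A)$ (dimension $d-2$) and $\omega\in\mathbb{R}P^{d-1}$ (dimension $d-1$), so the slice $\{m\in A\}$ of the phase space has dimension $2d-3$. By Lemma \ref{cal} each $\gamma_{A,B}$ is cut out by a single equation, of one of the three listed shapes, each homogeneous of degree one in $\omega$; hence $\gamma_{A,B}$ lies in a hypersurface $\mathcal{H}_{A,B}$ of degree at most two (cf. Remark \ref{rem}). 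Since a diagonal carries an open set of trajectories, $\gamma_{A,B}$ is a full-dimensional (dimension $2d-4$) relatively open semialgebraic subset of $\mathcal{H}_{A,B}$; discarding any $C_i$ for which $\gamma_{A,C_i}$ is empty or not full-dimensional, I may assume every $\gamma_{A,C_i}$ has dimension $2d-4$.

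Then I would prove the forward implication, which I expect to be the heart of the matter. From $\gamma_{A,B}=\bigcup_i\gamma_{A,C_i}$ one has in particular $\gamma_{A,C_i}\subseteq\gamma_{A,B}$ for every $i$, i.e. every line issued from $\mathrm{aff}(A)$ meeting $\mathrm{aff}(C_i)$ must also meet $\mathrm{aff}(B)$. Now meeting a fixed $(d-2)$-flat is a codimension-one condition, so the lines meeting both $\mathrm{aff}(A)$ and $\mathrm{aff}(C_i)$ form a family of dimension $2d-4$, matching $\gamma_{A,C_i}$; imposing incidence with $\mathrm{aff}(B)$ is a further codimension-one condition, which is \emph{independent} of the first two unless the three flats lie in a common hyperplane. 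Concretely, I would compare the defining equations of $\gamma_{A,C_i}$ and $\gamma_{A,B}$ furnished by Lemma \ref{cal} and show that the equation of $\gamma_{A,B}$ vanishes on $\gamma_{A,C_i}$ exactly when the integer data of the two faces satisfy the linear relation expressing that $\mathrm{aff}(A),\mathrm{aff}(C_i),\mathrm{aff}(B)$ lie in one hyperplane (for the ``case $(1)$'' equations this is precisely the proportionality of the two fixed coordinates of $C_i$ and $B$). Thus each $C_i$, together with $A$ and $B$, lies in a hyperplane, and since that hyperplane must carry the whole beam of segments from $A$ to $B$ it is the same $H$ for all $i$.

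For the reverse implication I would use that if $A,B$ and all the $C_i$ lie in a hyperplane $H$, then any line joining a point of $A$ to a point of $B$ meets $H$ in two distinct points and hence lies entirely in $H$. Restricting to $H$ and applying Lemma \ref{proj}, the situation becomes a billiard in one dimension lower, in which $\mathrm{aff}(A)$, $\mathrm{aff}(B)$ and the $\mathrm{aff}(C_i)$ are hyperplanes of $H$; inside $H$ the pencil of segments from $A$ to $B$ crosses exactly the intermediate $(d-2)$-faces $C_i$ of the tessellation, so each such segment meets some $C_i$ and, conversely, a segment meeting some $C_i$ continues to $B$. This yields both inclusions and hence $\gamma_{A,B}=\bigcup_i\gamma_{A,C_i}$.

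The main obstacle will be the codimension count in the forward direction: making rigorous that incidence with $\mathrm{aff}(B)$ becomes a \emph{dependent} condition on the family of lines through $\mathrm{aff}(A)$ and $\mathrm{aff}(C_i)$ precisely in the coplanar case. This requires handling the degenerate shapes of Lemma \ref{cal} (cases where $A$ and $B$ share a fixed coordinate, so that $\mathcal{H}_{A,B}$ factors), where one must check separately that a genuine decomposition with all $C_i\neq B$ is impossible unless the coplanarity relation holds, so that on both sides of the equivalence the same configurations survive.
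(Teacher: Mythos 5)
Your plan is correct and follows essentially the same route as the paper: the paper also proves the forward direction by comparing the defining equations of Lemma \ref{cal} (via their differentials) and showing that coincidence on a set of positive measure forces two equations of type $(1)$ with proportional coefficient vectors, the mixed and type-$(2)$/$(3)$ coincidences being excluded because they confine $(m,\omega)$ to a measure-zero set such as $\{m_i=0\}$ --- exactly the dependence-versus-independence dichotomy you propose to make rigorous, and it too reduces to one $C_i$ at a time. The only real divergence is that you give an explicit geometric argument for the converse (restriction to the common hyperplane $H$ and Lemma \ref{proj}), whereas the paper leaves that direction implicit in the equivalence between proportionality of the type-$(1)$ forms and coplanarity.
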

\begin{proof}
We consider the three functions which appear in Lemma \ref{cal}.
$$\begin{cases}
f(\omega)=n\omega_i-p\omega_j,\\
g(m,\omega)=m_i+\frac{n\omega_i}{\omega_j}=p,\\
h(m,\omega)=\omega_jm_i-\omega_im_j-(n\omega_i+p\omega_j).
\end{cases}$$
The diagonals $\gamma_{A,B},\gamma_{A,C_i}$ have equations of the
type $f,g,h$ by preceding Lemma (with different $n,p,i,j$). Remark
that these equations are quadratic in the variables $m,\omega$.
Thus these maps are analytic.

We compute the jacobian of these maps, it gives
$$
\begin{cases}
df=\begin{pmatrix}\dots&0&\dots||&\dots&n&\dots -p\dots
\end{pmatrix}\\
dg=\begin{pmatrix}0&\dots&\omega_j&\dots&||&\dots&n&\dots
m_i-p\dots
\end{pmatrix}\\
dh=\begin{pmatrix}0&\dots&\omega_j&\dots&-\omega_i&\dots||&\dots&-m_i-n&\dots
m_i-p\dots
\end{pmatrix}
\end{cases}$$
 Now without loss of generality we treat the case $l=1$.
 The sets $\gamma_{A,B}$, $\gamma_{A,C}$ are equal if and
only if two of the preceding functions are equal on a set of
positive measure. It implies that the linear forms are
proportional. Assume that two different forms are proportional
(for example $df$ and $dg$). It implies that $m_i=0$, thus the
equality is true on an hyperplane, and they are not equal on a set
of positive measure. Thus the only possibility is that the two
equations are of the same type (i.e two equations $df$ or two
equations $dg$). Then the same argument shows that the equality of
two equations of the type $dg$ or $dh$ implies that $(m,\omega)$
lives on a set of zero measure. Thus the only possibility is the
equality of two vectors $df$. And it is equivalent to the fact
that $A,B,C$ belong to the same hyperplane.
\end{proof}
\begin{lemma}\label{long}
Let $A,B$ be two faces of dimension less or equal than $d-2$. Assume
$A,B$ are at combinatorial length $n$, and that the elements of
$A$ are of the form
$$\begin{pmatrix}
m_1\\ \vdots\\ m_{d-2}\\ 0\\0
\end{pmatrix}.$$
Then we have:\\
$\bullet$Either $A,B$ are in a subspace of dimension $d-2$ then
points of $B$ are
of the form $$\begin{pmatrix} b_1\\ \vdots\\ b_{d-2}\\
n_{d-1}\\n_d
\end{pmatrix},$$ with
$n_{d-1},n_d\in\mathbb{N}$, $\gcd(n_{d-1},n_d)=1\ and\ \displaystyle\sum_{i=1}^{d-2}E(b_i)+n_{d-1}+n_d=n$\\

$\bullet$ Or the points of $B$ have the following coordinates:
$$(i,j)\neq (d-1,d)\quad \begin{pmatrix}
b_1\\ \vdots\\ n_i\\ \vdots\\ n_j\\ \vdots\\
b_{d-2}\end{pmatrix},$$ with $n_{i},n_j\in\mathbb{N}\ and\
\displaystyle\sum_{l=1}^{d-2}E(b_l)+n_{i}+n_j=n$.
\end{lemma}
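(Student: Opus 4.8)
The plan is to read off the coordinate shape of $B$ from Lemma \ref{cal} and then to compute the combinatorial length directly, coordinate by coordinate, as a count of integer-hyperplane crossings. First I would fix a generic oriented segment of the diagonal: it starts at a point $a=(a_1,\dots,a_{d-2},0,0)\in A$ with each $a_i\in(0,1)$ --- since the initial segment lies in $[0,1]^d$ we have $E(a_i)=0$ --- and it stops at a point $b\in B$. A face of dimension $d-2$ of the tessellation has exactly two coordinates pinned to integers, and the case distinction in the statement is precisely whether that pinned pair is again $(d-1,d)$, so that $B$ is parallel to $A$ (the first case), or a different pair $(i,j)\neq(d-1,d)$ (the second case). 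This is exactly the distinction already isolated in the proof of Lemma \ref{cal} according to where the two integers $n,p$ sit among the coordinates.

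Next I would express the combinatorial length as a sum of per-direction crossing counts. For a segment meeting the walls one at a time in its interior, each coordinate direction contributes the number of integers its coordinate sweeps across between the two endpoints, with the convention that the terminal face $B$ is counted and the initial face $A$ is not. A free direction $\ell$ then contributes the integers lying in $(a_\ell,b_\ell]$, that is $E(b_\ell)$ crossings because $a_\ell\in(0,1)$; a direction pinned by $B$ to an integer $n_\ell$, along which the coordinate runs from its value on $A$ up to $n_\ell$, contributes exactly $n_\ell$ crossings. Summing over all $d$ directions gives $\sum_{\ell}E(b_\ell)+n_{d-1}+n_d=n$ in the first case and the analogous identity $\sum_{\ell}E(b_\ell)+n_i+n_j=n$ in the second, which are the two length relations asserted.

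To obtain the primitivity condition $\gcd(n_{d-1},n_d)=1$ in the first case I would project onto the $(x_{d-1},x_d)$-plane and apply Lemma \ref{proj}: the projected orbit is a billiard segment of the unit square joining the lattice vertex $(0,0)$, the image of $A$, to the lattice vertex $(n_{d-1},n_d)$, the image of $B$. Such a segment meets an interior lattice point if and only if $\gcd(n_{d-1},n_d)>1$, and an interior lattice point is the image of a face of dimension $d-2$ that the orbit would have to meet strictly before reaching $B$, contradicting that the segment stops in $B$. Hence $\gcd(n_{d-1},n_d)=1$.

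The step I expect to be the main obstacle is the crossing count itself: showing cleanly that, with the stated endpoint convention, the coordinate-wise sum is exactly $n$ and not $n\pm 1$. The two delicate points are the bookkeeping at the codimension-$2$ endpoint faces $A$ and $B$ --- each is the meeting of two walls and must be charged correctly --- and the verification that the interior crossings in the directions $d-1$ and $d$ are genuinely simple, i.e.\ no two of them occur at the same time, which is again where $\gcd(n_{d-1},n_d)=1$ enters. Once the segment is taken generic within the diagonal, every other coincidence of crossing times is excluded and the count becomes unambiguous.
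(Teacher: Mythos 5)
Your proof is correct, and it keeps the paper's overall skeleton --- a case split according to which pair of coordinates of $B$ is pinned to integers, a length identity, and a projection onto the plane of the pinned coordinates for the primitivity condition --- but it differs in two ways worth recording. For the length identity, the paper argues by induction on $n$: it first shows that the faces of dimension $d-1$ at combinatorial length $n$ from $A$ satisfy $\sum_{i=1}^{d}E(b_i)=n$, the sum increasing by one at each new face crossed. Your direct per-coordinate crossing count, with the convention that the two terminal walls of $B$ are charged and the initial walls of $A$ are not, is the same computation with the induction unrolled; your observation that $a_\ell\in(0,1)$ forces each free coordinate to contribute exactly $E(b_\ell)$, and that $E(b_\ell)$ is constant on the interior of the face $B$, is precisely what makes the count (and hence the combinatorial length) well defined, a point the paper leaves implicit. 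More substantively, you never invoke Proposition \ref{calcul}, which the paper uses to show that the only case in which every trajectory from $A$ to $B$ can be forced through an intermediate face --- so that an extra arithmetic condition can arise --- is when $A$ and $B$ lie in a common hyperplane. You can afford to skip it because you prove only the necessity direction that the statement literally asserts: the length relation in both bullets plus, in the parallel case, $\gcd(n_{d-1},n_d)=1$, which you obtain by the same projection as the paper but with a self-contained argument (an interior lattice point of the projected segment lifts to a $(d-2)$-face met strictly before $B$, where the billiard orbit terminates, whereas the paper simply asserts that the projected line contains no integer points). What the appeal to Proposition \ref{calcul} buys the paper is the converse half --- that in the non-coplanar case no hidden condition occurs, i.e.\ such pairs genuinely carry diagonals --- which is the form in which the lemma is exploited later when lifting words; but since that lifting lemma inside the proof of Proposition \ref{projec} re-invokes Proposition \ref{calcul} on its own, nothing downstream is lost by your leaner argument.
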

\begin{proof}
$\bullet$ First of all we consider the faces of dimension $d-1$
which are at combinatorial length $n$ of $A$. We claim that the
points $(b_i)_{i\leq d}$ of these faces verify
$\displaystyle\sum_{i=1}^{d}E(b_i)=n$.\\
The proof is made by induction on $n$. It is clear for $n=1$, now
consider a billiard trajectory of length $n$, it means that just
before the last face we intersect another face of the same cube.
These face is at combinatorial length $n-1$, and we can apply the
induction process. Now consider a point of these faces, denote by
$(c_i)_{i\leq d}$ its coordinates. We verify easily that
$\displaystyle\sum_{i=1}^{d-2}E(b_i)-\displaystyle\sum_{i=1}^{d}E(c_i)=1$
for all point $b,c$. This finishes the proof of the claim.
\begin{figure}
\includegraphics[width= 4.2cm]{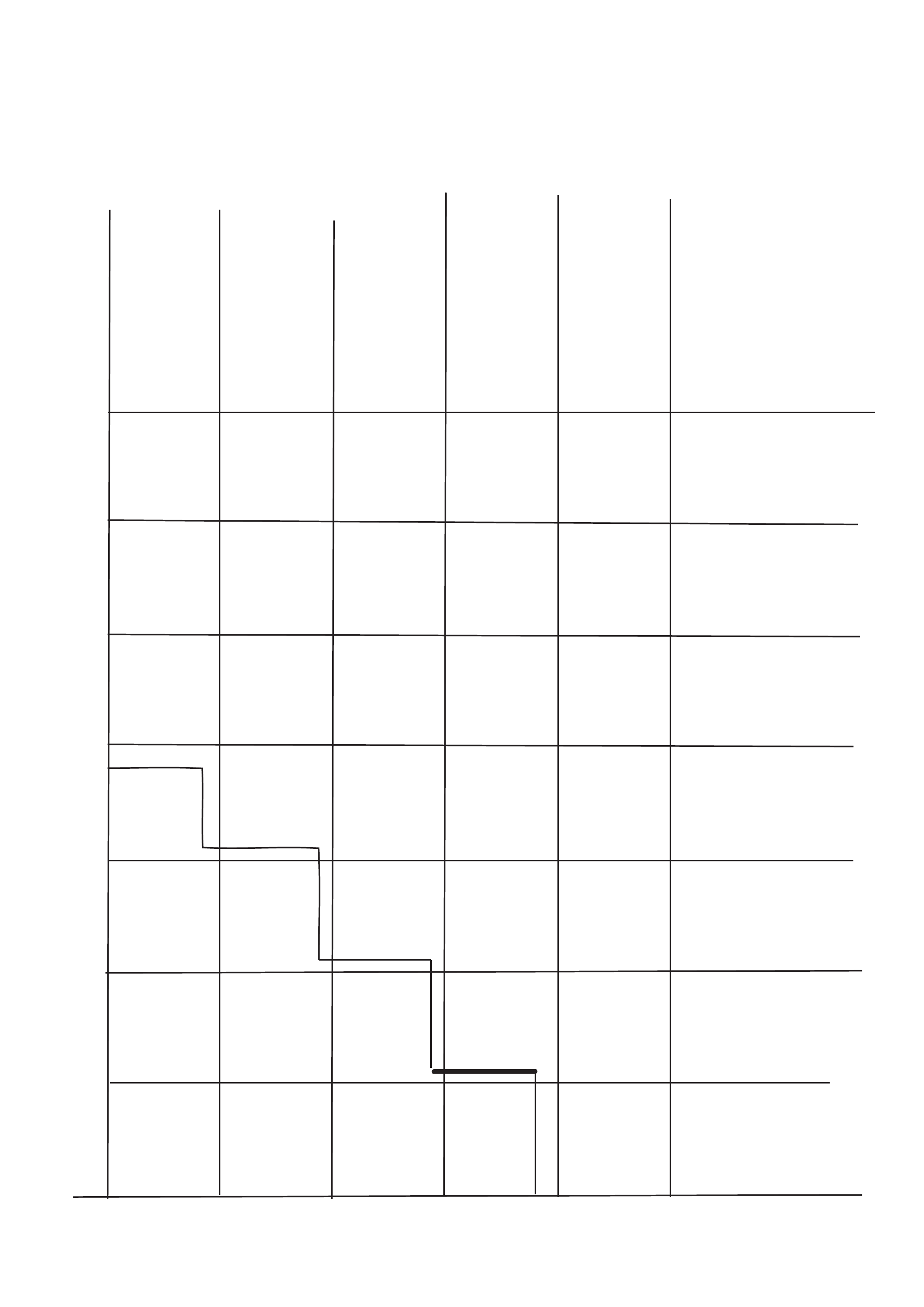}
\caption{Length of billiard words}
\end{figure}
In this figure the path represents the faces at length $n$ of the
initial square. In the figure we have $n=3.$

 A trajectory between $A$ and $B$ is a diagonal if the trajectory does not
intersect another face. It means that $\gamma_{A,B}$ must not be
the union of $\gamma_{A,C_i}$, where $C_i$ are at length less than
$n$ from $A$. We use Proposition \ref{calcul} which implies that
the only bad case is when $A,B$ are on a
same hyperplane. Thus the second point of the Lemma is proved.\\

Now assume $A,B$ are contained in a hyperplane. The fixed
coordinates of all points in $A$ and $B$ are at the same places.
Then we project on the plane generated by these coordinates. The
diagonal projects on a line. This line does not contain integer points. Thus we obtain the primality condition.\\

\end{proof}
\begin{corollary}\label{diag}
We deduce that there exists $C>0$ such that
$$card Diag(n)\leq Cn^{d-1}.$$
\end{corollary}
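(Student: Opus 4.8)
The plan is to bound the number of diagonals of combinatorial length $n$ by counting their endpoints, reducing everything to an elementary count of compositions of the integer $n$. By the conventions fixed in the Notations, every diagonal in $Diag(n)$ has its initial segment in the cube $[0,1]^d$, so the initial face $A$ ranges over the finitely many faces of $[0,1]^d$ of dimension at most $d-2$; their number is a constant depending only on $d$. It therefore suffices to show that, for each fixed such $A$, the number of faces $B$ with $d(A,B)=n$ is $O(n^{d-1})$: indeed $\gamma_{A,B}$ is determined by the pair $(A,B)$, so counting all admissible pairs only overestimates $card\ Diag(n)$.

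First I would put $A$ in the normal form of Lemma \ref{long}, with its last two coordinates vanishing. That lemma describes completely the admissible target faces $B$ of dimension $d-2$: in each of its two cases, $B$ is specified by a choice of the two coordinates that are integer-valued (at most $\binom{d}{2}$ possibilities, a constant) together with the $d$ non-negative integers consisting of those two integer coordinates and the integer parts $E(b_l)$ of the remaining coordinates, subject to the single linear relation that their sum equals $n$.

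The counting is then purely combinatorial: the number of ordered $d$-tuples of non-negative integers with fixed sum $n$ is $\binom{n+d-1}{d-1}$, which is bounded by $Cn^{d-1}$. The extra arithmetic constraint $\gcd(n_{d-1},n_d)=1$ appearing in the first case of Lemma \ref{long} only decreases this count, so it may be discarded for an upper bound. Faces $B$ of dimension strictly less than $d-2$ are governed by the same kind of linear constraint with again $d$ integer parameters (now $d-k$ integer coordinate values and $k$ integer parts for a face of dimension $k$), hence contribute $O(n^{d-1})$ as well; summing over the finitely many placements of the integer coordinates and the finitely many dimensions leaves the order unchanged. Multiplying by the constant number of admissible initial faces $A$ yields $card\ Diag(n)\leq Cn^{d-1}$.

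The point requiring the most care is not the asymptotics but the bookkeeping. One must verify that Lemma \ref{long} genuinely exhausts all faces $B$ at combinatorial length $n$ for every placement of the integer coordinates and for each lower dimension, and that the length condition always reduces to the single equation $\sum E(b_l)+(\text{integer coordinates})=n$ rather than an inequality, so that the exact composition count $\binom{n+d-1}{d-1}$ applies uniformly. Once this is settled, no estimate beyond the elementary bound on the number of compositions of $n$ into $d$ parts is needed.
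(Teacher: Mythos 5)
Your proposal is correct and takes essentially the same approach as the paper: both use Lemma \ref{long} to parametrize the admissible faces $B$ by $d$ nonnegative integers summing to $n$ (discarding the $\gcd$ condition, and treating the choice of $A$ and the placement of the integer coordinates as bounded combinatorial data), so that $card\,Diag(n)$ is dominated by the number of compositions of $n$ into $d$ parts, which is $O(n^{d-1})$. The only cosmetic difference is that the paper handles faces of dimension below $d-2$ by reducing to dimension-$(d-2)$ faces through a bounded boundary-incidence count, whereas you count them directly with the same parametrization.
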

\begin{proof}
A diagonal $\gamma_{A,B}$ can be of several forms among the
dimension of the faces. Since a face of dimension $d-1$ has a
bounded number of faces of dimension less than $d-1$ in its
boundary, we can reduce to count the diagonals between faces of
dimension $d-1$. Then the number of diagonals is bounded by a
constant $C(d)$ times the number of diagonals between faces of
dimension $d-2$. The preceding Lemma shows that we have the
inequality
$$card Diag(n)\leq C card\{(n_i)_{1\leq i\leq d}, n_i\in\mathbb{N}|
\displaystyle\sum_{i=1}^{d}n_i=n\}.$$
$$card Diag(n)\leq Cn^{d-1}.$$
\end{proof}
\section{Upper bound}
In this section we show
\begin{theorem}\label{min}
There exists $C>0$ such that
$$s(n,d)\leq Cn^{3d-4}.$$
\end{theorem}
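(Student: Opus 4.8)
The plan is to prove the bound by induction on $d$, and to pass from the second difference of the complexity to $s$ itself by summation. By Lemma \ref{julien} the second difference is $s(n+1,d)-s(n,d)=\sum_{v\in\mathcal{BL}(n,d)}i(v)$, so it suffices to establish $\sum_{v\in\mathcal{BL}(n,d)}i(v)\le Cn^{3d-5}$; summing this over all lengths $k\le n$ and using $s(1,d)=O(1)$ then yields $s(n,d)\le Cn^{3d-4}$, since $\sum_{k\le n}k^{3d-5}=O(n^{3d-4})$. Proposition \ref{pr} reduces the quantity $\sum_{v\in\mathcal{BL}(n,d)}i(v)$ to the positive-octant contribution $2^d\sum_{v'\in\mathcal{BL}^+(n,d)}i(v')$ plus an error term $O(|s(n+1,d-1)-s(n,d-1)|)$, and its second inequality bounds $\sum_{\mathcal{BL}^+(n,d)}i(v')$ by $C\sum_{\gamma\in Diag(n)}\sum_{v\in\gamma}1$. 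The error term is a dimension $d-1$ quantity, hence $O(n^{3d-8})$ by the induction hypothesis, which is negligible compared with $n^{3d-5}$; the base case $d=2$ is the cubic bound of \cite{Ca.Hu.Tr}.

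It then remains to prove $\sum_{\gamma\in Diag(n)}\sum_{v\in\gamma}1\le Cn^{3d-5}$. Corollary \ref{diag} already gives $card\,Diag(n)\le Cn^{d-1}$, so I would show that a single diagonal carries at most $Cn^{2d-4}$ words. A diagonal $\gamma_{A,B}$ between two faces of dimension $d-2$ is parametrised by $(m,m')\in A\times B$, hence has dimension $2(d-2)=2d-4$ (the direction $\omega$ being then determined, in agreement with the one-equation description of Lemma \ref{cal}). Two segments of $\gamma$ realise different words precisely when, moving from one to the other inside $\gamma$, the segment sweeps across a face of dimension $d-2$. By Lemma \ref{cal} each such crossing is governed by an equation of type $(1)$, $(2)$ or $(3)$, quadratic in $(m,\omega)$; restricted to $\gamma$ these cut out finitely many hypersurfaces of bounded degree. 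Counting the words in $\gamma$ thus amounts to counting the connected components of $\gamma$ minus these hypersurfaces, and Corollary \ref{comb} together with Remark \ref{rem} bounds this number by $CN^{2d-4}$, where $N$ is the number of discontinuity hypersurfaces meeting $\gamma$.

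The heart of the matter, and the step I expect to be the main obstacle, is the estimate $N=O(n)$. Each trajectory in $\gamma$ has combinatorial length $n$, hence traverses exactly $n$ cubes; at each transition slot the segment can cross only the $O(1)$ faces of dimension $d-2$ in the boundary of the $(d-1)$-face it meets there, and the integer parameters $n,p$ in the equations of Lemma \ref{cal} are controlled by the combinatorial position, so that for the fixed pair $A,B$ only $O(n)$ of these equations define a hypersurface actually meeting $\gamma$. The delicate point to make rigorous is that, although the cube visited at a given slot may vary over the $(2d-4)$-dimensional family $\gamma$, only boundedly many faces are available per slot, so that the naive count $O(n^2)$ of admissible parameters does not in fact occur. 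Granting $N\le Cn$, one obtains $\sum_{v\in\gamma}1\le Cn^{2d-4}$, whence $\sum_{\gamma}\sum_{v\in\gamma}1\le Cn^{d-1}\cdot n^{2d-4}=Cn^{3d-5}$; together with Proposition \ref{pr}, the induction, and the summation over $k\le n$ this completes the proof.
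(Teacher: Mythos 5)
Your proposal follows the paper's own argument essentially step for step: induction on $d$ with base case $d=2$, Proposition \ref{pr} to pass to positive diagonals with a dimension-$(d-1)$ error term, the per-diagonal bound $\sum_{v\in\gamma}1\leq Cn^{2d-4}$ obtained by counting connected components of the $2(d-2)$-dimensional set $\gamma_{A,B}$ cut by $O(n)$ discontinuity hypersurfaces of degree at most $2$ (Lemma \ref{connexe}, Corollary \ref{comb}, Remark \ref{rem}), Corollary \ref{diag} for $card\, Diag(n)\leq Cn^{d-1}$, and summation of the second difference to get $s(n,d)\leq Cn^{3d-4}$. The one step you flag as the main obstacle, $N=O(n)$, is precisely what the paper asserts without justification (``$Cn$ hyperplanes''), and your sketch can be closed by noting that every segment of $\gamma_{A,B}$ joins two fixed unit faces and hence stays within bounded distance of the segment joining their centers, so at each of the $n$ slots only boundedly many cubes, and therefore only boundedly many faces of dimension $d-2$, can be crossed.
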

\begin{lemma}\label{connexe}
Let $A,B$ two faces of dimension less than $d-2$, then the set
$\gamma_{A,B}$ is of dimension less than $2(d-2)$.\\
For all $k$, for all subset $I$ of $\mathbb{N}$ of cardinality
$k$, there exists $C(k)>0$ such that for all $(A_i)_{i\in I}$
faces of dimension less than $d-2$, $\bigcap_{i,j\in
I}\gamma_{A_i,A_j}$ has at most $C(k,d)$ connected components.
\end{lemma}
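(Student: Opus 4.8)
\emph{Plan.} I would prove the two assertions separately: the first is a dimension count that I prefer to do by a parametrization rather than through the defining equations, and the second is a quantitative real-algebraic-geometry bound resting on the explicit equations of Lemma \ref{cal}.

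For the dimension statement I would use the chord map
$$\Psi: A\times B \longrightarrow A\times \mathbb{R}P^{d-1}, \qquad (m,m')\longmapsto (m, \overrightarrow{mm'}),$$
sending a pair of endpoints to the starting point together with the direction of the segment joining them. Its image is exactly $\gamma_{A,B}$ (intersected with the chart $\omega\in\mathbb{R}^{*d}$): a point $(m,\omega)$ lies in $\gamma_{A,B}$ precisely when the line $m+\mathbb{R}\omega$ meets $B$, and this intersection recovers $m'$. The map $\Psi$ is generically finite, indeed injective off a lower-dimensional locus, since for fixed $(m,\omega)$ the point $m'=(m+\mathbb{R}\omega)\cap B$ is unique unless the line lies in the affine hull of $B$. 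Hence $\dim\gamma_{A,B}=\dim A+\dim B$. As $A,B$ have dimension at most $d-2$ this yields $\dim\gamma_{A,B}\le 2(d-2)$, with strict inequality as soon as one of the faces has dimension strictly below $d-2$.

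For the bound on connected components the key point is that every diagonal is cut out by a bounded number of polynomials of degree at most two. By Lemma \ref{cal} and its proof, eliminating the line parameter $\lambda$ from the membership conditions $m_{i_s}+\lambda\omega_{i_s}=n_s$ produces equations of the types $(1),(2),(3)$, each quadratic in $(m,\omega)\in\mathbb{R}^{2d}$ after clearing the denominators $\omega_\bullet\neq 0$; for faces of codimension larger than two one simply gets more such equations, at most $d-1$ of them and still of degree $\le 2$. Thus each $\gamma_{A_i,A_j}$ is a semialgebraic subset of $\mathbb{R}^{2d}$ defined by at most $d-1$ polynomial equations of degree $\le 2$, together with the (bounded-degree) open conditions $m\in A_i$ and ``the intersection point lies in $A_j$''. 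Consequently $\bigcap_{i,j\in I}\gamma_{A_i,A_j}$ is a semialgebraic set in $\mathbb{R}^{2d}$ described by at most $\binom{k}{2}(d-1)$ polynomial (in)equalities, all of degree at most two. I would then invoke the quantitative topological bound already used in Remark \ref{rem} (the Milnor--Thom estimate, equivalently the bound on the Euler characteristic): the number of connected components of a semialgebraic set cut out by $s$ polynomials of degree $\le\delta$ in $\mathbb{R}^{N}$ is bounded by a constant depending only on $s,\delta,N$. With $s\le\binom{k}{2}(d-1)$, $\delta=2$ and $N=2d$, this constant depends only on $k$ and $d$, which is the required $C(k,d)$.

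The routine steps are the elimination of $\lambda$ and the verification that each resulting equation stays quadratic. The only genuine obstacle I foresee is the careful passage to honest polynomial data: one must restrict to the chart $\{\omega\in\mathbb{R}^{*d}\}$ to clear the denominators $\omega_j$ without creating or merging connected components, and one must package the face-membership constraints as finitely many bounded-degree inequalities so that the semialgebraic form of the Milnor--Thom bound applies. Once the data are organized this way the estimate is immediate and, crucially, uniform in the integers $n,p$ labelling the faces, so it holds simultaneously for every configuration $(A_i)_{i\in I}$.
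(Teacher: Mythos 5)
Your proof is correct, and it diverges from the paper's own proof in an instructive way on the first assertion while essentially coinciding on the second. For the dimension bound the paper argues by codimension: $m$ ranges over $A$ (dimension at most $d-2$), the direction $\omega$ over $\mathbb{R}P^{d-1}$ (dimension $d-1$), and the equation supplied by Lemma \ref{cal} cuts this down by one, giving $(d-2)+(d-1)-1=2(d-2)$. You instead bound the dimension from the other side, parametrizing $\gamma_{A,B}$ by the chord map on $A\times B$, so that $\dim\gamma_{A,B}\leq\dim A+\dim B$ follows with no reference to the defining equations at all; this buys you slightly more (the exact dimension via generic injectivity of $\Psi$, and automatic strict inequality when a face has dimension below $d-2$) and sidesteps the nondegeneracy the paper's count tacitly uses, namely that the equation of Lemma \ref{cal} does not vanish identically on $A\times\mathbb{R}P^{d-1}$. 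Both arguments land on the bound $2(d-2)$ that is fed into Corollary \ref{comb} later, so nothing downstream changes. For the second assertion your route is the paper's: the citation of Fulton (Ex 8.4.5) is precisely the Milnor--Thom-type bound on Betti numbers of sets cut out by bounded-degree polynomials that you invoke, applied to the quadratic equations of Lemma \ref{cal}. Your write-up is more careful on two points the paper leaves implicit and which genuinely need saying: the packaging of the face-membership constraints and of the chart condition $\omega_j\neq 0$ as finitely many bounded-degree semialgebraic conditions (so that the component bound applies to the actual sets $\gamma_{A_i,A_j}$ and not just to the zero sets of the equations), and the uniformity of the constant in the integer labels $n,p$, which enter only as coefficients and never raise the degree --- this is exactly what makes $C(k,d)$ independent of the configuration $(A_i)_{i\in I}$, as the paper's Remark \ref{rem} asserts without detail.
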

\begin{proof}
The first part is a consequence of lemma \ref{cal}. Indeed $A$ is
of dimension less than $d-2$, the directions lives in
$\mathbb{R}P^{d-1}$ which is of dimension $d-1$ and the manifold has one equation.\\
For the second part we use again Lemma \ref{cal}. The equation of
these sets are polynomial equation of bounded degree (2), and a
theorem of \cite{Ful} (Ex 8.4.5) finishes the proof.
\end{proof}

\begin{proposition}
There exists $C>0$ such that for all $A,B$ faces of dimension less
than $d-2$, at combinatorial length $n$,  we have
$$\sum_{v\in\gamma_{A,B}}1\leq Cn^{2d-4}.$$
\end{proposition}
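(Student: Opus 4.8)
The idea is to read $\sum_{v\in\gamma_{A,B}}1$ as a count of connected components. Along a segment $(m,\omega)\in\gamma_{A,B}$ the billiard word is locally constant on $\gamma_{A,B}$: it can only change when the segment meets a face of dimension $d-2$, that is, when $(m,\omega)$ crosses a discontinuity in the sense defined above. Hence the number of distinct words appearing in $\gamma_{A,B}$ is bounded by the number of connected components into which the relevant discontinuities cut $\gamma_{A,B}$. By Lemma \ref{connexe}, $\gamma_{A,B}$ is a set of dimension at most $2(d-2)=2d-4$ described by equations of degree $2$ (Lemma \ref{cal}), and each discontinuity is likewise cut out by equations of degree at most $2$. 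Thus the statement reduces to two points: (i) bounding the number $N$ of discontinuities that actually meet $\gamma_{A,B}$, and (ii) counting the resulting cells.

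I would dispose of (ii) first, since it is immediate from the tools of Section 2: cutting the $(2d-4)$-dimensional set $\gamma_{A,B}$ by $N$ hypersurfaces of bounded degree produces at most $C\,N^{2d-4}$ connected components, by Corollary \ref{comb} in its bounded-degree form (Remark \ref{rem}). Therefore it is enough to establish $N\leq C\,n$, and then $\sum_{v\in\gamma_{A,B}}1\leq C\,N^{2d-4}\leq C\,n^{2d-4}$.

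The core of the proof, and the step I expect to be the main obstacle, is the linear bound $N\leq C\,n$. A discontinuity meeting $\gamma_{A,B}$ is determined by an intermediate face $C$ of dimension $d-2$ that is crossed by some segment of the diagonal. Every such segment runs, in the unfolding, from the face $A$ to the face $B$, which are at combinatorial length $d(A,B)=n$; both $A$ and $B$ lie in unit faces and hence have diameter bounded by a constant depending only on $d$. Consequently the union $\mathcal{T}$ of all segments from $A$ to $B$ is a convex region whose every transverse slice $(1-t)A+tB$ has bounded diameter, while its length is of order $n$. Such a tube meets only $O(n)$ unit cubes of the tessellation, and each cube carries a bounded number of faces of dimension $d-2$; hence the faces $C$ that can possibly be crossed number $O(n)$, which gives $N\leq C\,n$.

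Combining (i) and (ii) yields $\sum_{v\in\gamma_{A,B}}1\leq C\,n^{2d-4}$. The only delicate point is to make the tube estimate rigorous, i.e.\ to check that the bounded cross-section together with combinatorial length $n$ really forces only $O(n)$ crossable faces of dimension $d-2$; once this is granted, the conclusion is a direct application of the cell-counting results already proved, with the dimension $2d-4$ of $\gamma_{A,B}$ supplying the exponent.
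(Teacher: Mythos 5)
Your proof is correct and follows essentially the same route as the paper: the words of $\gamma_{A,B}$ are identified with connected components of the diagonal (of dimension $2(d-2)$, by Lemma \ref{connexe}) cut by the discontinuities, and the count follows from Corollary \ref{comb} in its bounded-degree form (Remark \ref{rem}) applied with $Cn$ hypersurfaces. The point you single out as delicate---the linear bound $N\leq Cn$ on the number of discontinuities, obtained from the bounded-cross-section tube argument---is precisely the step the paper leaves implicit when it writes ``we apply Corollary \ref{comb} with $Cn$ hyperplanes,'' so your version supplies a detail the published proof omits.
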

\begin{proof}
We consider the cell related to $\gamma_{A,B}$. This space is
partitioned with several discontinuities. The number of sets of
the partition is equal to the number of words $v$ in $\gamma_{A,B}$. First if a
discontinuity does not partition, we prolong it. It gives an upper
bound for the number of words. Then we consider the partition made
by two discontinuities, Lemma \ref{connexe} implies that the
number of connected components is bounded by $C$. Then we apply
Corollary \ref{comb} with $Cn$ hyperplanes (in fact algebraic varieties of degree at most 2 see Remark \ref{rem}), and $x=2(d-2)$ due to
the first part of Lemma \ref{connexe}.
\end{proof}
\subsection{Proof of Theorem \ref{min}}
We make an induction on $d$. If $d=2$ it is a consequence of
\cite{Mig} or \cite{Ber.Poc}, or \cite{Ca.Hu.Tr}.

 By Proposition
\ref{pr} we deduce
$$\displaystyle\sum_{v\in\mathcal{BL}(n,d)}i(v)\leq
2^d[\sum_{\gamma\in Diag(n)}\sum_{v\in\gamma}1+s(n+1,d-1)].$$ Then
the preceding proposition shows
$$\displaystyle\sum_{v\in\mathcal{BL}(n,d)}i(v)\leq
2^d[\sum_{\gamma\in Diag(n)}Cn^{2d-4}+s(n+1,d-1)].$$ 
Corollary \ref{diag}
implies that $card(Diag(n))\leq n^{d-1}$, we deduce
$$\displaystyle\sum_{v\in\mathcal{BL}(n,d)}i(v)\leq
2^d[Cn^{3d-5}+s(n+1,d-1)],$$

By induction we deduce
$$s(n+1,d)-s(n,d)\leq C[n^{3d-5}+n^{3d-7}].$$
$$s(n,d)\leq Cn^{3d-4}.$$
The induction is proved.

\section{Lower bound} We prove
\begin{theorem}\label{maj}
There exists $C>0$ such that for all $n$:
$$s(n+1,d)-s(n,d)\geq Cn^{3d-5}.$$
\end{theorem}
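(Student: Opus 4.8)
The plan is to route everything through Cassaigne's formula and Proposition \ref{pr}, reducing the desired lower bound on $s(n+1,d)-s(n,d)$ to a lower bound on the total number of words carried by the positive diagonals of combinatorial length $n$. By Lemma \ref{julien} we have $s(n+1,d)-s(n,d)=\sum_{v\in\mathcal{BL}(n,d)}i(v)$, and combining the two parts of Proposition \ref{pr} with the inequality $i(v)\geq 1$ from Lemma \ref{i(v)} gives
$$s(n+1,d)-s(n,d)\geq 2^d\sum_{\gamma\in Diag(n)}\sum_{v\in\gamma}1-C\,|s(n+1,d-1)-s(n,d-1)|.$$
The proof of Theorem \ref{min}, applied in dimension $d-1$, bounds the last term by $Cn^{3(d-1)-5}=Cn^{3d-8}$ (this second difference is nonnegative, being a sum of $i(v)\geq 0$, and bounded above before summation), which is negligible in front of the target $n^{3d-5}$. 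Hence it suffices to prove
$$\sum_{\gamma\in Diag(n)}\sum_{v\in\gamma}1\geq Cn^{3d-5}.$$

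I would establish this last inequality by induction on $d$, the case $d=2$ being \cite{Ca.Hu.Tr}. The whole point of the induction is to gain a factor $n^3$ when passing from dimension $d-1$ to dimension $d$: roughly $n^2$ because each diagonal gains two dimensions (we have $\dim\gamma_{A,B}=2(d-2)$ by Lemma \ref{connexe}, two more than in dimension $d-1$) and each of the two new directions is cut by $Cn$ discontinuities, and roughly one more factor $n$ because there are that many more admissible endpoints at combinatorial length $n$.

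Concretely, I would use the projection Lemma \ref{proj} to view the first $d-1$ coordinates as a $(d-1)$-dimensional billiard, so that the words and diagonals produced by the induction hypothesis in dimension $d-1$ describe the cells of the projected configuration. I would then restrict to diagonals of the first type of Lemma \ref{long}, whose $(d-1,d)$-component is a primitive integer vector $(n_{d-1},n_d)$ with $\gcd(n_{d-1},n_d)=1$ and $\sum_i E(b_i)+n_{d-1}+n_d=n$. For each such diagonal the number of words $\sum_{v\in\gamma}1$ is bounded below by the inductive count attached to the projected configuration, multiplied by the number of new cells created in the extra coordinate direction. Summing over the primitive directions and over the distribution of $n-n_{d-1}-n_d$ among the remaining coordinates, the coprimality constraint is absorbed without the loss of any power of $n$ by Lemma \ref{nombre} (with $p$ chosen to match the inductive exponent); this is exactly why the weighted bound $\sum_{l\leq n}S_l\geq Cn^{p+2}$ is stated in that form. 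Collecting the exponents yields the required $Cn^{3d-5}$.

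The main obstacle is to make the lifting genuinely rigorous rather than merely dimensionally correct. One must check that the constructed segments are diagonals of combinatorial length \emph{exactly} $n$, i.e.\ that $\gamma_{A,B}$ does not split as a union $\bigcup_i\gamma_{A,C_i}$ through intermediate faces; this is precisely controlled by Proposition \ref{calcul}, and it is where the primitivity $\gcd(n_{d-1},n_d)=1$ is essential, since it prevents $A,B$ from lying on a common hyperplane with shorter faces. One must also verify that distinct choices of the lower-dimensional word and of the new-direction cell give distinct bispecial words of $\mathcal{BL}^+(n,d)$, keeping the word-to-diagonal correspondence under control via Lemma \ref{unicode} and Lemma \ref{surjcu}, and that each lifted word is indeed bispecial so that it contributes to the sum. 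Once these genuineness and distinctness points are settled, the arithmetic bookkeeping is routine and Lemma \ref{nombre} delivers the exponent $3d-5$.
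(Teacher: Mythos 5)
Your opening reduction is exactly the paper's: Lemma \ref{julien} plus Proposition \ref{pr}, with the error term $|s(n+1,d-1)-s(n,d-1)|=O(n^{3d-8})$ killed by the upper bound of Theorem \ref{min} in dimension $d-1$, leaving the task of proving $\sum_{\gamma\in Diag(n)}\sum_{v\in\gamma}1\geq Cn^{3d-5}$. Your list of ingredients for that bound (projection via Lemma \ref{proj}, restriction to the coprime diagonals of Lemma \ref{long}, non-splitting via Proposition \ref{calcul}, and Lemma \ref{nombre}) is also the paper's. But the engine of your induction step is a claim the paper neither makes nor needs, and it is a genuine gap: you assert that \emph{for each} coprime diagonal $\gamma$, the count $\sum_{v\in\gamma}1$ is at least the projected count \emph{multiplied by} the number of new cells ($\sim n^2$) created in the extra directions. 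A product structure of cells is only an upper-bound mechanism (it is how Corollary \ref{comb} bounds cell counts from above); as a per-diagonal lower bound it is false. Concretely, take $d=3$ and a type-1 diagonal from an edge $A=\{(m_1,0,0)\}$ to $B=\{(b_1,n_2,n_3)\}$ with $b_1\in[0,1]$, $n_2+n_3=n$, $\gcd(n_2,n_3)=1$: along every trajectory of $\gamma_{A,B}$ the first coordinate stays in $[0,1]$, so the letter corresponding to the first pair of faces never occurs, and $\gamma_{A,B}$ carries only $O(1)$ words, not $\geq cn^2$ times a projected count. No lemma in the paper lower-bounds the number of cells that discontinuities actually create inside a given diagonal, and proving such a statement is essentially as hard as the theorem itself.

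The paper gains the factor $n^3$ by a different, aggregate accounting in which the only per-diagonal input is the trivial inequality $\sum_{v\in\gamma}1\geq\sum_{v'\in\pi(\gamma)}1$ (surjectivity of $\pi$ on billiard words, your ``new cells'' factor being $1$). The lifting lemma (the paper's Proposition \ref{projec}, which your Proposition-\ref{calcul} discussion essentially reconstructs) shows that \emph{every} word of $\mathcal{L}(i,d-1)$, $i\leq n-1$, lifts into some diagonal of length $n$; slicing this identification by the last coordinate $n_{d-1}$ of the endpoint expresses the aggregate projected count as the first difference $s(n_{d-1},d-1)$. The induction hypothesis — the theorem itself in dimension $d-1$, i.e.\ $s(m+1,d-1)-s(m,d-1)\geq Cm^{3d-8}$ — is then summed once to give $s(n_{d-1},d-1)\geq Cn_{d-1}^{3d-7}$, and
$$X\geq\sum_{n_d\leq n}\ \sum_{\substack{n_{d-1}\leq n_d\\ \gcd(n_{d-1},n_d)=1}}(n_{d-1})^{3d-7}\geq Cn^{3d-5}$$
by Lemma \ref{nombre} with $p=3d-7$. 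Note the split of the gain is the opposite of yours: $n^2$ comes from the two endpoint parameters $(n_{d-1},n_d)$ handled by Lemma \ref{nombre}, and the remaining $n$ from passing from second differences to first differences of the complexity; your choice of $p$ ``matching the inductive exponent'' ($3d-8$) would fall one power short even if your cell factor were granted. To repair the proof, drop the multiplicative cell factor entirely, keep surjectivity of $\pi$ as the only per-diagonal estimate, and route the induction through the cumulative quantity $s(\cdot,d-1)$ as above.
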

The proof is made by induction on $d$. It is clear for $d=2$ due
to \cite{Mig} or \cite{Ber.Poc} or \cite{Ca.Hu.Tr}, assume it is
true for $i\leq d-1$.

\begin{definition}
Let $A,B$ two faces of dimension less than $d-2$, we denote by
$C_{A,B}$ the vector space generated by $\vec{A},\vec{B}$.\\
Let $\pi:\mathbb{R}^d\rightarrow C_{A,B}$ be the orthogonal
projection.
\end{definition}

 Consider a trajectory of a fixed diagonal, it is
coded by a word $v$, the image of the trajectory by $\pi$ is a
billiard trajectory, due to Lemma \ref{proj}. Thus the map $\pi$
can be extended to words, we denote it again by $\pi$.
\begin{equation*}
\begin{CD}
\gamma   @>\pi>>  \pi(\gamma)\\
@V{\phi}VV        @VV{\phi}V\\
v @>>\pi> \pi(v)
\end{CD}
\end{equation*}
The map $\pi$ consists to erase some letters, due to Lemma
\ref{proj}.

\subsection{Projection and language}
The aim of this section is to prove
\begin{proposition}\label{projec}
We have
$$\displaystyle\bigcup_{\gamma_{A,B}\in Diag(n)}\{\pi(v),
v\in\gamma_{A,B}\}=\displaystyle\bigcup_{i\leq n-1}\mathcal{L}
(i,d-1).$$
\end{proposition}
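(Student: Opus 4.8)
The plan is to prove the set equality by establishing the two inclusions separately: the forward inclusion is essentially a bookkeeping consequence of Lemma \ref{proj}, while the reverse inclusion requires an explicit lifting construction, which I expect to be the substantive part.

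For the inclusion $\subseteq$, I would start from a word $v \in \gamma_{A,B}$ with $\gamma_{A,B} \in Diag(n)$ and apply Lemma \ref{proj}: the orthogonal projection onto $C_{A,B}$ of a billiard trajectory coded by $v$ is again a billiard trajectory, now inside a cube of dimension $d-1$, so $\pi(v)$ is a genuine word of that lower-dimensional language. On words the map $\pi$ merely erases the letter attached to the direction orthogonal to $C_{A,B}$, as recorded after the definition of $\pi$. The only point to verify is the length: by the combinatorial-length accounting of Lemma \ref{long}, the total number of cube crossings is $n$, and the erased direction contributes at least one crossing (otherwise $A$ and $B$ would lie in a common subspace of dimension $d-2$). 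Hence the number of surviving letters is at most $n-1$, so $\pi(v) \in \mathcal{L}(i,d-1)$ for some $i \leq n-1$, giving the forward inclusion.

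For the inclusion $\supseteq$, I would argue by lifting. Fix $w \in \mathcal{L}(i,d-1)$ with $i \leq n-1$ and realize it by a billiard trajectory $\tau$ in the $(d-1)$-cube. I would then lift $\tau$ to a segment $\tilde\tau$ in $[0,1]^d$ by prescribing a velocity in the extra coordinate, choosing a rational slope so that: (a) $\tilde\tau$ starts on a face $A$ and ends on a face $B$ of dimension $d-2$, so that $\tilde\tau$ determines a diagonal $\gamma_{A,B}$; (b) the extra coordinate is swept through exactly the number of integer levels needed to raise the total combinatorial length to $n$; and (c) the slope is small and generic enough that the crossings in the first $d-1$ directions retain the same order as in $\tau$, so that erasing the extra letter returns exactly $w$, i.e.\ $\pi(v)=w$ for the code $v$ of $\tilde\tau$. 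The non-degeneracy of the resulting diagonal, namely that $\gamma_{A,B}$ is not a union of shorter diagonals $\gamma_{A,C_j}$, I would obtain from Proposition \ref{calcul} by arranging that $A,B$ do not lie in a common hyperplane, which is exactly the position and primality condition recorded in Lemma \ref{long}.

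The main obstacle is the inclusion $\supseteq$, and within it the simultaneous matching of three constraints for a single choice of the extra slope: the combinatorial length must equal $n$, the projection must reproduce $w$ without reordering the retained letters, and the endpoints must lie on legitimate faces defining a non-degenerate diagonal. I expect the delicate point to be the padding of the combinatorial length from $i$ up to $n$ by crossings in the erased direction: one must check, using the integer-level count of Lemma \ref{long}, that there is enough freedom in the slope to realize every intermediate length $i \leq n-1$ while preserving the order of the projected crossings. Once this freedom is secured, surjectivity onto $\bigcup_{i\leq n-1}\mathcal{L}(i,d-1)$ follows and the two inclusions close the proof.
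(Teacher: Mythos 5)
Your proposal is correct and follows essentially the same route as the paper: the forward inclusion from Lemma \ref{proj}, and the reverse inclusion by lifting to the extra coordinate at level $x_d=n-i$ (the paper lifts the face $B'$, you lift the trajectory itself, which amounts to the same construction), with non-degeneracy of the resulting diagonal obtained from Proposition \ref{calcul} together with the conditions of Lemma \ref{long}. The only cosmetic slip is the requirement that the extra slope be ``small'': it is in fact pinned to $(n-i)/T$ by the endpoint condition, and only genericity (achieved by perturbing within the open cell of $w$) is needed, which does not affect the argument.
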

\begin{proof}
The inclusion $\displaystyle\bigcup_{\gamma_{A,B}\in Diag(n)}\{\pi(v),
v\in\gamma_{A,B}\}\subset\displaystyle\bigcup_{i\leq n-1}\mathcal{L}
(i,d-1).$

is a consequence of Lemma \ref{proj}. To prove
the second inclusion we need:
\begin{lemma}
Let $i\leq n-1$, and let $v\in\mathcal{L}(i,d-1)$ be a billiard
word between two faces $A,B'$ of dimension $d-2$. There exists a
face $B$ of dimension $d-2$ such that:\\
$$d(A,B)=n,$$
$$\gamma_{A,B}\ \text{is a diagonal},$$
$$\pi_{A,B}(B)=B'.$$
\end{lemma}
\begin{proof}
By Lemma \ref{long}, we can always lift the face $B'$ in a face
$B$ with  $d(A,B)=n$. We just have to translate $B'$ to the
coordinate $x_d=n-i$. The only point to prove is that the
trajectories between $A,B$ form a diagonal. We make a proof by
contradiction. Then each trajectory between $A,B$ intersects
another face $C_i$. It implies that $\gamma_{A,B}$ is cover by
some $\gamma_{A,C_i}$. Contradiction with Proposition
\ref{calcul}.
\end{proof}
Now the proof of the Proposition is a simple consequence of this
lemma and of Lemma \ref{proj}.
\end{proof}
\begin{corollary}\label{proj2}
For any diagonal $\gamma$ we have
$$\sum_{v\in\gamma}1\geq \sum_{v\in\pi(\gamma)}1.$$
\end{corollary}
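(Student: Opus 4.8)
The plan is to read the claimed inequality as a counting statement about the surjectivity of the projection on the level of codings. The quantity $\sum_{v\in\gamma}1$ counts the distinct words that appear in the diagonal $\gamma$, while $\sum_{v\in\pi(\gamma)}1$ counts the distinct words appearing in the projected diagonal $\pi(\gamma)$; since the latter is a genuine billiard object in the cube of dimension $\dim C_{A,B}$ by Lemma \ref{proj}, both quantities are well defined. I would deduce the inequality from the single fact that the induced map on words $v\mapsto\pi(v)$ sends the set of words of $\gamma$ \emph{onto} the set of words of $\pi(\gamma)$.

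First I would check that $v\mapsto\pi(v)$ is well defined as a map between these two finite sets of words. The orthogonal projection onto $C_{A,B}$ retains the coordinate directions spanning $C_{A,B}$ and kills the others; by Lemma \ref{proj} the image of a billiard trajectory is again a billiard trajectory, and at the level of codes this amounts to erasing the letters attached to the killed directions. The positions of the erased letters form a purely combinatorial datum attached to the faces $A,B$, hence to the diagonal, so that $\pi(v)$ depends only on $v$ and not on the particular trajectory of $\gamma$ carrying the code $v$. This is precisely the content of the commutative square relating $\gamma,\pi(\gamma)$ with $v,\pi(v)$, and it rests on the uniqueness of coding furnished by Lemma \ref{unicode}.

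Next I would establish surjectivity, which is where the very definition of $\pi(\gamma)$ does the work. Let $w$ be an arbitrary word of $\pi(\gamma)$; it is the code of some oriented segment $\sigma$ lying in the projected diagonal. Since $\pi(\gamma)$ is by construction the image under $\pi$ of $\gamma$, the segment $\sigma$ equals $\pi(\tau)$ for some segment $\tau$ of $\gamma$. Writing $v$ for the code of $\tau$, we obtain $\pi(v)=w$, so every word of $\pi(\gamma)$ is attained. A surjection between finite sets forces the source to be at least as large as the target, which yields $\sum_{v\in\gamma}1\geq\sum_{v\in\pi(\gamma)}1$.

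I expect the well-definedness step to be the real obstacle, not surjectivity: one must guarantee that erasing letters is a function of the abstract word rather than of a chosen trajectory, because \emph{a priori} two distinct trajectories of $\gamma$ could share the code $v$ and yet meet the killed directions in different faces. The uniqueness assertion of Lemma \ref{unicode} pins down, for each position $i\geq 2$, the single face carrying that letter, so the set of positions to erase is common to all trajectories coding $v$. Once this is granted, the remainder is the elementary pigeonhole comparison above.
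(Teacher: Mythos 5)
Your argument proves the inequality only under the reading that $\pi(\gamma)$ is the set-theoretic image of $\gamma$, and under that reading the statement is empty: a map is always onto its own image, so the ``surjectivity'' step you describe is a tautology, not a proof of anything. The corollary cannot mean that, because of the way it is used in the proof of Theorem \ref{maj}: there the quantity $\sum_{v\in\pi(\gamma)}1$ is identified with $f(n_1,\dots,n_{d-2})$, i.e.\ with the \emph{full} word count of a genuine diagonal of the $(d-1)$-dimensional billiard, which is then summed to recover $s(n_{d-1},d-1)$. So the substantive content of the corollary is that every word of the lower-dimensional diagonal $\gamma_{\pi(A),\pi(B)}$ is attained as the projection of a word of $\gamma$ --- a lifting statement, which is exactly what the paper's proof invokes: the lemma inside the proof of Proposition \ref{projec}, whose own proof rests on Lemma \ref{long} and on the non-covering result, Proposition \ref{calcul}.

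You have the difficulty located backwards. Well-definedness of $v\mapsto\pi(v)$ (your ``real obstacle'') is the easy half; it follows from Lemma \ref{proj} and the commutative square, essentially as you say. The hard half, which your proposal dismisses ``by the very definition of $\pi(\gamma)$,'' is the lifting: given a nonsingular trajectory of the lower-dimensional diagonal with code $w$, its lift to a segment from $A$ to $B$ is forced (the projection is injective on $A$ and on $B$ since $\vec{A},\vec{B}\subset C_{A,B}$), but \emph{a priori} that lifted segment could strike an intermediate face of dimension $d-2$, hence be singular and contribute no word of $\gamma$; if this happened for all trajectories coding $w$, then $w$ would lie outside the image and the count $\sum_{v\in\gamma}1$ could be strictly smaller than the word count of the projected diagonal. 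Ruling this out is precisely the role of Proposition \ref{calcul}, which says $\gamma_{A,B}$ can be covered by diagonals $\gamma_{A,C_i}$ only when the faces are coplanar. Without an appeal to that proposition (or to the lemma in Proposition \ref{projec}), your argument establishes only the trivial image-counting inequality and not the statement the paper actually needs.
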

\begin{proof}
By preceding Lemma, a word of $\pi(\gamma)$ can be lift in a word
of $\gamma$. In other word the map $\pi$ is surjective on the
billiard words.
\end{proof}

\subsection{Proof of Theorem \ref{maj}}
 We fix the face $A$ as
in Lemma \ref{cal}. By Lemma \ref{long} the coordinates
$(n_1,\dots,n_d)$ of $B$ can be of two types:
$$n_1+\dots+n_d=n,\ \gcd(n_{d-1},n_d)=1\quad\text{or};$$
$$n_1+\dots+n_d=n.$$
\begin{definition}
We denote these sets of diagonals by $Diag_1(n)$ and $Diag_2(n)$.
Let $\gamma$ be a diagonal, the number $\sum_{v\in \gamma}1$ is
denoted by $f(n_1,\dots,n_d)$ or $g(n_1,\dots,n_d)$ if $\gamma$ is
in $Diag_1(n)$ or not.
\end{definition}
Due to Proposition \ref{pr} we must compute
$$X=\displaystyle\sum_{\gamma\in Diag(n)}\sum_{v\in \gamma}1.$$
By Corollary \ref{proj2} we can write this sum as
$$X=\displaystyle\sum_{\gamma\in Diag_1(n)}\sum_{v\in \gamma}1+\displaystyle\sum_{\gamma\in Diag_2(n)}\sum_{v\in \gamma}1.$$
$$X=\displaystyle\sum_{\gamma\in Diag_1(n)}f(n_1,\dots,n_d)+\sum_{\gamma\in Diag_2(n)}g(n_1,\dots,n_d).$$
Now we use the projection $\pi$:

 By Lemma \ref{long} and \ref{proj2} we deduce
$$X\geq\displaystyle\sum_{n_d,n_{d-1}}\sum_{n_1,\dots,n_{d-2}}[f(n_1,\dots,n_{d-2})\chi(n_d,n_{d-1})+g(n_1,\dots,n_d)],$$
where $\chi(n_d,n_{d-1})=\begin{cases}1\ if\ \gcd(n_d,n_{d-1})=1\\
0\ either \end{cases}$

Now Proposition \ref{projec} implies that
$$\displaystyle\sum_{n_1+\dots+n_{d-1}=n}[f(n_1,\dots,n_{d-1})+g(n_1,\dots,n_{d-1})]=p(n,d-1).$$
This can be written as
$$\displaystyle\sum_{n_{d-1}\leq n}\sum_{n_1+\dots+n_{d-2}=n-n_{d-1}}[f(n_1,\dots,n_{d-1})+g(n_1,\dots,n_{d-1})]=p(n,d-1).$$

$$\displaystyle\sum_{n_1+\dots+n_{d-2}=n-n_{d-1}}[f(n_1,\dots,n_{d-1})+g(n_1,\dots,n_{d-1})]=s(n_{d-1},d-1).$$
 Then we deduce
$$X\geq\displaystyle\sum_{n_d,n_{d-1}}\sum_{n_1,\dots,n_{d-2}}[f(n_1,\dots,n_{d-2})+g(n_1,\dots,n_d)]\chi(n_{d-1},n_d),$$
$$X\geq\displaystyle\sum_{n_d}\sum_{n_{d-1}}\sum_{n_1,\dots,n_{d-2}}[f(n_1,\dots,n_{d-2})+g(n_1,\dots,n_d)]\chi(n_{d-1},n_d),$$
$$X\geq \displaystyle\sum_{n_d\leq n}\sum_{n_{d-1}\leq n-n_d}s(n_d{-1},d-1)\chi(n_d,n_{d-1}).$$

Then the induction hypothesis shows that
$$|s(n+1,d-1)-s(n,d-1)|\geq n^{3d-8}.$$
It implies a lower bound on $s(n,d-1)$.
$$X\geq \displaystyle\sum_{n_d\leq n}\sum_{n_{d-1}\leq
n-n_d}(n_{d-1})^{3d-7}\chi(n_d,n_{d-1}).$$
$$X\geq \displaystyle\sum_{n_d\leq n}\sum_{n_{d-1}\leq
n_d}(n_{d-1})^{3d-7}\chi(n-n_d,n_{d-1}).$$
$$X\geq \displaystyle\sum_{n_d\leq n}\sum_{\substack{n_{d-1}\leq
n_d\\\gcd(n_{d-1},n_d)=1}}(n_{d-1})^{3d-7}.$$

We apply Lemma \ref{nombre} with $p=3d-7$ and obtain:
$$X\geq Cn^{3d-5}.$$
Now by Proposition \ref{pr} we have
$$s(n+1,d)-s(n,d)\geq CX+O(s(n,d-1)-s(n,d-1)).$$

 We apply again the induction
to obtain a lower bound for the error term of Proposition
\ref{pr}. This term is bounded by $n^{3d-8}$. Thus we have
$$s(n+1,d)-s(n,d)\geq Cn^{3d-5}-n^{3d-8}.$$
$$s(n+1,d)-s(n)\geq Cn^{3d-5}.$$
The proof by induction is finished.
\section{Proof of the main theorem}
We just have to join Theorems \ref{min} and \ref{maj}. $\Box$
\\

{\bf Acknowledgment} We would like to thank Christian Mauduit, who
worked on an earlier version of this paper. Moreover we would like to thank Joel Rivat for useful discussions.

\bibliographystyle{alpha}
\bibliography{biblioc}
\end{document}